\title{Pursuing polynomial bounds on torsion}
\date{\today}
\numberwithin{equation}{section}
\begin{document}
\author{Pete L. Clark}
\author{Paul Pollack}

\def\tors{{\rm tors}}
\def\Z{\mathbb{Z}}
\def\Q{\mathbb{Q}}
\newcommand{\R}{\mathbb{R}}
\newcommand{\C}{\mathbb{C}}
\def\Ss{\mathcal{S}}
\def\l{\mathfrak{l}}
\newcommand{\PP}{\mathbb{P}}
\newcommand{\sep}{\operatorname{sep}}
\def\UNIFORM{\texttt{UNIFORM}}
\def\MINDEGREE{\texttt{MINDEGREE}}
\newcommand{\ra}{\rightarrow}
\newcommand{\ord}{\operatorname{ord}}
\newcommand{\N}{\mathbb{N}}
\renewcommand{\gg}{\mathfrak{g}}
\newcommand{\GL}{\operatorname{GL}}
\newcommand{\SL}{\operatorname{SL}}
\newcommand{\PSL}{\operatorname{PSL}}
\newcommand{\Zhat}{\widehat{\Z}}
\newcommand{\Aut}{\operatorname{Aut}}
\newcommand{\End}{\operatorname{End}}
\newcommand{\cyc}{\operatorname{cyc}}
\newcommand{\lcyc}{\ell\text{-}\!\operatorname{cyc}}
\newcommand{\SI}{\operatorname{SI}}
\renewcommand{\ra}{\rightarrow}
\newcommand{\Ker}{\operatorname{Ker}}

\newtheorem{lemma}{Lemma}[section]
\newtheorem{thm}[lemma]{Theorem}

\newtheorem{conj}[lemma]{Conjecture}
\newtheorem{cor}[lemma]{Corollary}

\theoremstyle{remark}
\newtheorem{remark}[lemma]{Remark}

\makeatletter
\def\subsection{\@startsection{subsection}{2}%
  \z@{.5\linespacing\@plus.7\linespacing}{.3\linespacing}%
  {\normalfont\bfseries}}
\makeatother


\begin{abstract}
We show that for all $\epsilon > 0$, there is a constant $C(\epsilon) > 0$ such that
for all elliptic curves $E$ defined over a number field $F$ with $j(E) \in \Q$ we have \[ \# E(F)[\tors] \leq C(\epsilon) [F:\Q]^{5/2 + \epsilon}.\]
We pursue further bounds on the size of the torsion subgroup of an elliptic curve over a number
field $E_{/F}$ that are polynomial in $[F:\Q]$ under restrictions on $j(E)$.  We give an unconditional result for
$j(E)$ lying in a fixed quadratic field that is not imaginary of class number one as well as two further results, one conditional on GRH and one conditional
on the strong boundedness of isogenies of prime degree for non-CM elliptic curves.
\end{abstract}

\maketitle


\section*{Notation}
\noindent
Let $\mathcal{P}$ be the set of prime numbers.  For a commutative group $G$, we denote the subgroup of elements of order dividing $n$ by $G[n]$ and the torsion subgroup -- i.e., the subgroup of elements of finite order -- by $G[\tors]$.  For $\mathfrak{s} \subset \mathcal{P}$,
we let $G[\mathfrak{s}^{\infty}]$ denote the subgroup of $G[\tors]$ of elements with order divisible only by primes in $\mathfrak{s}$.
If for a commutative group $G$ we have $G = G[n]$ for some $n \in \Z^+$ -- as is the case if $G$ is finite -- then the least such $n$ is the \textbf{exponent}
$\exp G$ of $G$.  \
\\ \indent
For a field $F$, let $\overline{F}$ be an algebraic closure.  We denote by $\gg_{F} = \Aut(\overline{F}/F)$
the absolute Galois group of $F$.  For $n \in \Z^+$ and a field $F$ of characteristic $0$, let $F(\mu_n)$ be the field obtained by adjoining to $F$ the
$n$th roots of unity, and let $F^{\cyc} = \bigcup_n F(\mu_n)$.  Let $\chi_{\ell}\colon \gg_F \ra \Z_{\ell}^{\times}$
be the $\ell$-adic cyclotomic character, and put
\[ F^{\lcyc} \coloneqq \overline{F}^{\Ker \chi_{\ell}} =  \bigcup_{n \in \Z^+} F(\mu_{\ell^n}). \]

\section{Introduction}

\subsection{Known bounds on the torsion subgroup}
\noindent
For an elliptic curve $E$ defined over a number field $F$, the torsion subgroup $E(F)[\tors]$ is finite.  Moreover, by Merel's strong\footnote{Here and throughout this paper we use the term ``strong'' to mean a bound that is uniform across elliptic curves defined over number fields of any fixed degree.}
 uniform boundedness theorem \cite{Merel96}, as we range over all degree $d$ number fields
$F$ and all elliptic curves $E_{/F}$, we have
\[ T(d) = \sup \# E(F)[\tors] < \infty. \]
Merel's work gives an explicit upper bound on $T(d)$, which was improved by Oesterl\'e (unpublished, but see \cite{Derickx16}) and Parent \cite{Parent99}.  These lie more than an exponential away from the
best known lower bound, due to Breuer \cite{Breuer10}:
\[ \inf_d \frac{T(d)}{d \log \log d} > 0. \]
Various authors have conjectured that Breuer's lower bound is esentially sharp.

\begin{conj}
\label{STRONGCONJ}
We have $T(d) = O(d \log \log d)$.
\end{conj}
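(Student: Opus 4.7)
The plan is to treat elliptic curves with complex multiplication (CM) and those without separately, since these regimes behave quite differently --- and since the worst case, matching Breuer's lower bound, lies on the CM side.

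As a structural first step, for $E_{/F}$ write $E(F)[\tors] \cong \Z/a\Z \times \Z/b\Z$ with $a \mid b$, and set $d = [F:\Q]$.  The Weil pairing forces $\Q(\mu_a) \subset F$, so $\varphi(a) \leq d$, which already gives $a = O(d \log \log d)$.  It remains to bound $b$, since $\#E(F)[\tors] = ab$.

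For non-CM $E$, I would seek a quantitative open image theorem forcing $[F:\Q]$ to be at least a positive power of $b$ whenever $E(F)$ contains a point of order $b$.  Combined with the $a$-torsion bound above, such a statement would give $\#E(F)[\tors] = O_\epsilon(d^{1+\epsilon})$ in the non-CM case --- strictly better than the conjectured $d \log \log d$.  The concrete analytic input required is uniform control of the index of the mod-$\ell$ Galois representation as both $\ell$ and $F$ vary, together with the strong uniform boundedness of cyclic isogenies of prime degree (a well-known but open statement).

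For CM $E$, the bound $d \log \log d$ is conjecturally sharp, and the strategy is explicit class field theory.  If $\End(E_{/\overline{F}}) = \mathcal{O}$ is an order in the imaginary quadratic field $K$, then a point of order $b$ on $E$ generates over $K(j(E))$ an abelian extension containing a ring class field of conductor dividing $b$.  Comparing its degree to $b$ using the Mertens-type estimate $b/\varphi(b) \ll \log \log b$ produces exactly the $\log \log$ factor, and carrying out the balance between $a$ and $b$ carefully should yield $ab = O(d \log \log d)$.

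The principal obstacle is the non-CM case: the required uniformity in Serre's open image theorem --- in particular, the uniform elimination of exceptional primes where the mod-$\ell$ image lies in a Borel or the normalizer of a Cartan subgroup, as $F$ ranges over all number fields of bounded degree --- is a deep and currently open problem.  A plausible conditional route is to assume GRH (for effective Chebotarev density) together with the strong uniform boundedness of prime-degree isogenies for non-CM curves; this would reduce the full conjecture to the CM analysis above, and is consistent with the conditional theorems announced in the abstract of the present paper.
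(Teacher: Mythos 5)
This statement is labeled as a \emph{conjecture} in the paper, not a theorem. The authors attribute it to ``various authors'' on the strength of Breuer's lower bound $T(d) \gg d \log\log d$, and they immediately emphasize that even the much weaker Conjecture~\ref{WEAKERCONJ} (polynomial boundedness, $T(d) = O(d^B)$) ``seems to lie out of current reach.'' There is no proof of Conjecture~\ref{STRONGCONJ} in the paper to compare against, nor does the paper claim to supply one: the paper's actual theorems prove bounds only in restricted regimes (bounded $[\Q(j(E)):\Q]$) and only with exponents $5/2+\epsilon$, nowhere near $1 + o(1)$.

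Your write-up is candid about this and is best read as a strategy sketch rather than a proof; as such it is accurate in its outlines but does not close the gap, and indeed cannot with current technology. A few specific observations. Your CM discussion is essentially a description of what \cite{Silverberg88,Silverberg92,CP15,CP17} already accomplish, and in that regime $T_{\operatorname{CM}}(d) \sim (e^\gamma\pi/\sqrt{3})\, d\log\log d$ is a theorem, so no new idea is needed there. Your non-CM discussion correctly identifies the obstruction: a uniform adelic open image theorem across all number fields of bounded degree would give $\#E(F)[\tors] = O(d)$ in the non-CM case (even without the $\epsilon$), but this is the ``\emph{ultima Thule}'' that the paper pointedly does not prove; the paper's Theorem~\ref{STRONGARAITHM} is only an $\ell$-adic statement for each fixed $\ell$, which is why the unconditional and conditional results top out at exponent $3/2+\epsilon$ for the exponent of the torsion subgroup. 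Finally, your proposed conditional route --- GRH plus strong boundedness of prime-degree isogenies --- is precisely the hypothesis set of Theorems~\ref{THM2} and~\ref{THM3}, but those theorems deliver only $O_\epsilon(d^{5/2+\epsilon})$, not $O(d\log\log d)$; so that route, as carried out in the paper, does \emph{not} reduce the full conjecture to the CM case. The genuine missing ingredient, which you name but do not supply, is uniformity across \emph{all} primes simultaneously (an adelic, not $\ell$-adic, bound), and that remains open even for $d = 1$.
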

\noindent
The following weaker conjecture is more widely believed.

\begin{conj}
\label{WEAKERCONJ}
$T(d)$ is \emph{polynomially bounded}: there is $B > 1$ such that
\begin{equation*}
 T(d) = O(d^B).
\end{equation*}
\end{conj}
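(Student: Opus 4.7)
The plan is to decompose the torsion subgroup prime by prime:
\[
\#E(F)[\tors] = \prod_{\ell \in \mathcal{P}} \#E(F)[\ell^{\infty}],
\]
and then establish, with $d = [F:\Q]$, two polynomial bounds: (a) only primes $\ell \leq P(d)$ can have $E(F)[\ell] \neq 0$ for a fixed polynomial $P$; and (b) for each such prime, $\#E(F)[\ell^{\infty}] \leq Q(d)$ with $Q$ polynomial, uniformly in $\ell$ and $E$.

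For (a), the natural input is Abramovich's lower bound on the $\Q$-gonality of $X_1(\ell)$, which grows like $\ell^2/\log \ell$. A point of exact order $\ell$ in $E(F)$ produces a non-cuspidal $F$-rational point on $X_1(\ell)$ and hence a closed point of degree $\leq d$, so the gonality bound forces $\ell$ to be at most polynomial in $d$. Combined with the prime number theorem, this controls both $\max\{\ell : E(F)[\ell] \neq 0\}$ and (crudely) the number of such primes.

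For (b), a point of exact order $\ell^n$ in $E(F)$ generates a Galois-stable cyclic subgroup of order $\ell^n$, hence an $F$-rational cyclic $\ell^n$-isogeny; the Weil pairing also embeds $\mu_{\ell^n}$ into $F(E[\ell^n])$. Strong boundedness of cyclic isogenies --- conjectural in general, but known under GRH by variants of Masser--W\"ustholz and used in the paper via the hypotheses stated in the abstract --- would bound $\ell^n$ polynomially in $d$. Combining with Serre's open image theorem for non-CM $E$, and treating the CM case separately via the factorization of the Galois image through a Cartan subgroup and the theory of CM isogenies, one pins down $\#E(F)[\ell^{\infty}]$ polynomially in $d$.

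The main obstacle is exactly the interaction between (a) and (b). A priori, step (a) permits on the order of $d/\log d$ ``bad'' primes, and multiplying a polynomially-bounded $\ell$-primary factor across that many primes overshoots any polynomial in $d$. A genuine proof therefore requires showing that for a given $E/F$ the set of primes $\ell$ with $E(F)[\ell] \neq 0$ is actually sparse (e.g.\ of size $O(\log d)$) or that all but finitely many $\ell$-primary components are trivial with explicit control on the exceptions. This sparseness is precisely the arithmetic input that is currently missing in full generality, and it is the reason the paper's results operate under hypotheses on $j(E)$: restricting $j(E)$ to $\Q$ or to a fixed quadratic field rigidifies the mod-$\ell$ Galois representations enough that the finitely many exceptional primes can be controlled uniformly, replacing the intractable global sum with a single clean polynomial bound.
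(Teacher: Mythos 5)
This is a conjecture, not a theorem; the paper does not prove it and states explicitly that it ``seems to lie out of current reach.'' Read as an analysis of the obstruction rather than a proof, your proposal is broadly on target: you correctly observe that the naive two-step strategy --- bound the primes $\ell$ with $E(F)[\ell] \neq 0$ polynomially in $d$, and bound each $\ell$-primary factor polynomially in $d$ --- does not combine, because multiplying a factor of size up to $Q(d)$ across as many as $\pi(P(d))$ primes blows past any polynomial in $d$. You also correctly single out strong uniform boundedness of prime-degree isogenies (the paper's hypothesis $\operatorname{SI}(d_0)$) as the arithmetic input missing in general and supplied under the paper's hypotheses on $j(E)$: Mazur for $d_0 = 1$, Momose for most quadratic $F_0$, Larson--Vaintrob under GRH.

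Where the paper's conditional argument differs from your outline is the mechanism by which the ``too many bad primes'' obstacle is dispatched. The paper never separately bounds the largest bad prime and then each $\ell$-primary factor. Instead, small primes $\ell \leq \ell_0$ are absorbed collectively via the strong Arai theorem (Theorem \ref{STRONGARAITHM}) and its consequence (Theorem \ref{ARAICOR}), which give $\exp E(F)[\mathfrak{s}^\infty] = O(d^{1/2})$ for the fixed finite set $\mathfrak{s} = \{\ell \leq \ell_0\}$. For the large primes the engine is Lozano-Robledo's ramification theorem (Theorem \ref{BIGALVAROTHM}): once $E_{/F_0}$ has no rational $\ell$-isogeny (the content of $\operatorname{SI}(d_0)$ for $\ell > \ell_0$), a point of order $\ell^{b_\ell}$ creates a ramification index divisible, up to a bounded factor, by $\ell^{b_\ell}$. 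Since these large-prime ramification indices are pairwise coprime and multiply into $[F(E[12]):\Q] \leq 4608\, d$, the product $\prod_{\ell > \ell_0} \ell^{b_\ell}$ is bounded by $O_\epsilon(d^{1+\epsilon})$ essentially in one stroke; the sparseness you call for (there are in fact only $O(\log d)$ large bad primes) emerges as a corollary of this bound rather than being established first. So the missing ingredient for the general case is precisely $\operatorname{SI}(d_0)$ for all $d_0$, not an independent sparseness lemma.
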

\noindent
Even Conjecture \ref{WEAKERCONJ} seems to lie out of current reach.  Since the work of Merel, Osterl\'e and Parent, progress
on understanding the asymptotic behavior of $T(d)$ has come (only) by restricting the class of elliptic curves under
consideration.  For instance if we restrict to the case in which $E$ has \emph{complex multiplication} (CM) --  and write
$T_{\operatorname{CM}}(d)$ in place of $T(d)$ when this restriction is made --  breakthrough
work of Silverberg \cite{Silverberg88, Silverberg92} gave the asymptotically correct upper bound on the exponent,
and recent work by the present authors \cite{CP15,CP17} shows
\begin{equation*} \limsup_{d} \frac{T_{\operatorname{CM}}(d)}{d \log \log d} = \frac{e^{\gamma} \pi}{\sqrt{3}}. \end{equation*}
If we instead restrict to the class of elliptic curves with \emph{integral moduli} -- i.e., with $j$-invariant lying
in the ring $\overline{\Z}$ of algebraic integers -- and write $T_{\operatorname{IM}}(d)$ in place of $T(d)$, then
Hindry-Silverman showed \cite{HS99}
\begin{equation*} T_{\operatorname{IM}}(d) = O(d \log d). \end{equation*}

\subsection{In pursuit of polynomial bounds}
\noindent
In this paper we will pursue polynomial bounds on the size of the torsion subgroup in a different kind of restricted regime.
We begin by stating the following result, which conveys the flavor with a minimum of technical hypotheses.

\begin{thm}
\label{THM0}
Let $\epsilon > 0$.  Then there is $C=C(\epsilon)$ such that: for all degree $d$ number fields $F$ and
all elliptic curves $E_{/F}$ arising via base extension from an elliptic curve $(E_0)_{/\Q}$, we have
\begin{equation}
\label{THM0EQ}
\exp E(F)[\tors] \leq C
 d^{3/2 + \epsilon}.
\end{equation}
\end{thm}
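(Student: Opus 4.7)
The plan is to bound the exponent $N := \exp E(F)[\tors]$ by establishing a uniform lower bound on the degree $[\Q(P):\Q]$ for a torsion point $P$ of order $N$.  Since $j(E)\in\Q$, the curve $E$ becomes isomorphic to $(E_0)_{F'}$ over some extension $F'/F$ of degree at most $6$ for an elliptic curve $E_0/\Q$ with $j(E_0)=j(E)$ (the factor of $6$ accounting for the sextic twists in the $j \in\{0,1728\}$ cases); this costs at most a factor of $6$ in $d$, so we may assume $E=(E_0)_F$.  Taking $P \in E_0(F)$ of exact order $N$ gives $[\Q(P):\Q]\leq d$, and the theorem will follow from the uniform inequality
\[
[\Q(P):\Q] \;\geq\; c_\epsilon\, N^{2/3-\epsilon}
\]
valid for every $E_0/\Q$ and every torsion $P \in E_0(\overline{\Q})$ of order $N$.

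The CM case is easy: the hypothesis $j(E_0)\in\Q$ restricts $E_0$ to a finite list of CM curves (CM by an order in a class-number-one imaginary quadratic field), and the image of $\rho_N$ lies in the normalizer of the Cartan subgroup coming from the CM order; a direct orbit computation then yields $[\Q(P):\Q]\gtrsim N^{1-o(1)}$, much stronger than needed.  For the non-CM case, I would analyze the image $\rho_N(\gg_\Q)\subseteq \GL_2(\Z/N\Z)$ prime by prime.  By Mazur's rational isogeny theorem together with its uniform refinements (Bilu--Parent--Rebolledo ruling out the split-Cartan normalizer for $\ell$ large, and the classical exclusion of exceptional mod-$\ell$ images), for every prime $\ell$ outside a fixed finite set $S$ the image $\rho_\ell(\gg_\Q)$ either contains $\SL_2(\mathbb{F}_\ell)$ or is contained in the normalizer of a non-split Cartan.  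In either case a direct orbit computation gives $[\Q(P_\ell):\Q]\gtrsim \ell^{2a_\ell}$, where $\ell^{a_\ell}$ is the $\ell$-part of $N$ and $P_\ell$ is the corresponding torsion point of order $\ell^{a_\ell}$.  For $\ell\in S$, Borel images are possible, yielding only the weaker bound $\ell^{a_\ell}\leq C_\ell d$, but Kenku's refinement of Mazur's theorem bounds the number of Borel primes per $E_0/\Q$ by a universal constant.

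To combine the per-prime bounds into a lower bound on the orbit of $P$ in $\rho_N(\gg_\Q)$, I would use a Goursat-style argument: the only common abelian quotients of the $\rho_{\ell^{a_\ell}}(\gg_\Q)$ across different primes factor through the $\ell$-adic cyclotomic character, so the passage from the direct product to the fiber product costs at most a factor of $N^{o(1)}$.  The hardest part will be the tight control of the total contribution of the exceptional primes $\ell\in S$: one needs $\prod_{\ell\in S}\ell^{a_\ell}\leq C_\epsilon d^{2+\epsilon}$, which relies on Kenku's bound (limiting the number of Borel primes per $E_0$), the Weil pairing bound $F\supseteq\Q(\mu_m)$ on the full-$\ell$-torsion subfield, and a careful isogeny-character analysis.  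With these ingredients the combined lower bound becomes $[\Q(P):\Q]\gtrsim N^{2-o(1)}/d^{2+\epsilon}$, and combining with $[\Q(P):\Q]\leq d$ yields $N\leq C_\epsilon d^{3/2+\epsilon}$.
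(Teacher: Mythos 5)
Your strategy---bound $\exp E(F)[\tors]$ by bounding $[\Q(P):\Q]$ from below for a torsion point $P$---is the right one, but the route you take is genuinely different from the paper's, and it has concrete gaps. The paper proves Theorem \ref{THM0} by applying Theorem \ref{THM3} with Mazur's theorem (that $\operatorname{SI}(1)$ holds with $\ell_0=37$). The two key tools are: (i) the strong Arai theorem (Theorem \ref{STRONGARAITHM} / Theorem \ref{ARAICOR}), which gives a uniform bound on the index of the $\ell$-adic image and hence $\exp E(F)[\Ss_1^\infty] \le C\,d^{1/2}$ for the finite set $\Ss_1$ of primes $\le 37$; and (ii) Lozano-Robledo's ramification theorem (Theorem \ref{BIGALVAROTHM}), applied through a carefully constructed tower $K_{-1}=F_0 \subset K_0=F_0(E[12]) \subset K_1 \subset \cdots$ over which $E$ is semistable, which, via Lemma \ref{LEMMA1}, shows the ramification contributions from distinct large primes are essentially independent and gives $\exp E(F)[\Ss_2^\infty]\le C\,d^{1+\epsilon}$.

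Your approach replaces this with a prime-by-prime analysis of the \emph{global} image $\rho_N(\gg_\Q)$ and a Goursat argument to combine primes. There are several places this would fail as written. First, for large primes your trichotomy leaves the nonsplit Cartan normalizer case unexcluded (Bilu--Parent--Rebolledo kills the split Cartan normalizer, not the nonsplit one, and Serre's uniformity for the nonsplit case over $\Q$ is open), and in that case a ``direct orbit computation'' does \emph{not} give $[\Q(P_\ell):\Q]\gtrsim \ell^{2a_\ell}$: the determinant/cyclotomic constraint only forces $|\rho_{\ell^{a_\ell}}(\gg_\Q)| \gtrsim \varphi(\ell^{a_\ell})$, so the orbit of $P_\ell$ under a subgroup of a nonsplit Cartan could have size $\asymp \ell^{a_\ell}$, not $\ell^{2a_\ell}$. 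To upgrade this to $\ell^{2a_\ell}$ you would need to know the image is close to the full Cartan, which is exactly the content of an Arai-type uniform $\ell$-adic open image result; it is not elementary. Second, for the exceptional set $S$ your stated target $\prod_{\ell\in S}\ell^{a_\ell}\le C_\epsilon d^{2+\epsilon}$ (and note that you actually need $d^{1+\epsilon}$ for your claimed lower bound $[\Q(P):\Q]\gtrsim N^{2-o(1)}/d^{2+\epsilon}$ to come out) is not established by Kenku's theorem: Kenku bounds the \emph{number} of isogeny primes, not the $\ell$-power exponents. A Borel image mod $\ell$ only constrains $[\Q(P_\ell):\Q]$ via the isogeny character, which can have bounded order, leaving the exponent $a_\ell$ uncontrolled without additional input. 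The paper's use of Arai (giving $d^{1/2}$ for the small primes) is precisely what fills this hole. Third, the Goursat step is asserted without an argument; the paper sidesteps the difficulty of combining different primes group-theoretically by working instead with ramification and semistable reduction, where Lemma \ref{LEMMA1} makes the near-independence precise.

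Finally, a small point: for non-CM curves with $j(E)\in\Q$ a quadratic twist suffices, so the factor of $6$ in your reduction can be taken to be $2$; the $j\in\{0,1728\}$ cases are CM and you handle them separately anyway.
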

\noindent
Now we make some comments:
\begin{itemize}
\item Of course we can assume that $E$ does not have CM.
\item For any elliptic curve $E$ over a number field $F$, we have
\begin{equation}
\label{CRUDEEQ}
\# E(F)[\tors] \mid (\exp E(F)[\tors])^2.
\end{equation}
Thus, Theorem \ref{THM0}
implies a bound of $O_{\epsilon}(d^{3+\epsilon})$ on the size of the torsion subgroup itself.  Later we will give an improvement
of this bound.
\item An easy quadratic twisting argument allows us to establish the bound
(\ref{THM0EQ}) as we range over all elliptic curves $E_{/F}$ with $j(E) \in \Q$.
This serves to motivate the type of further result we would like to prove.
\end{itemize}
Let $d_0 \in \Z^+$.  For a positive integer $d$ that is divisible by $d_0$, let $T_{d_0}(d)$ be the supremum of
$\# E(F)[\tors]$ as $E$ ranges over all elliptic curves defined over a degree $d$ number field $F$ such that
$[\Q(j(E)):\Q] = d_0$.

\begin{conj}
\label{CONJ0}
For each $d_0 \in \Z^+$, there are $B = B(d_0)$ and $C=C(d_0)$ such that
\[ T_{d_0}(d) \le C d^B. \]
\end{conj}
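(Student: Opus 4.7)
The plan is to extend the framework of Theorem \ref{THM0}, which handles $d_0 = 1$, to arbitrary fixed $d_0$, identifying the extra inputs required. Write $F_0 \coloneqq \Q(j(E))$, so $[F_0:\Q] = d_0$, and replace $F$ by the compositum $F \cdot F_0$, which costs at most a factor of $d_0$ in the degree over $\Q$; we may thus assume $F_0 \subseteq F$ and $[F:F_0] \ge d/d_0$. A quadratic-twist argument (paralleling the one used to pass from $j \in \Z$ to $j \in \Q$ versions of such problems) lets us fix an $F_0$-rational model $E_0$ of each allowable $j$-invariant and bound $\# E_0(F)[\tors]$ uniformly over all $F \supseteq F_0$ with $[F:F_0] = d/d_0$. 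The known quasilinear bound on $T_{\operatorname{CM}}$ handles the CM case, so henceforth assume $E_0$ is non-CM.

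For each prime $\ell$ let $\rho_{\ell^n}\colon \gg_{F_0} \to \GL_2(\Z/\ell^n\Z)$ denote the Galois representation on $E_0[\ell^n]$. When the image of $\rho_{\ell^n}$ contains $\SL_2(\Z/\ell^n\Z)$, the $\gg_{F_0}$-orbit of any point of order $\ell^n$ has size $\ell^{2n-2}(\ell^2-1)$ (the number of primitive vectors in $(\Z/\ell^n\Z)^2$), so a point of order $\ell^n$ in $E_0(F)$ forces
\[ \ell^{2n-2}(\ell^2-1) \le [F:F_0], \]
yielding $\ell^n = O([F:F_0]^{1/2})$. Primes $\ell$ at which this large-image condition fails fall into three classes: Borel (corresponding to an $F_0$-rational cyclic $\ell$-isogeny on $E_0$), Cartan-normalizer (cyclic isogeny over a quadratic extension of $F_0$), and finitely many exceptional primes coming from bounded projective image. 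Summing the per-prime bounds $\ell^{n_\ell} = O([F:F_0]^{1/2})$ over $\ell$ produces a polynomial bound on $\exp E_0(F)[\tors]$, and applying (\ref{CRUDEEQ}) converts this into the desired polynomial bound on $\# E_0(F)[\tors]$.

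The chief obstacle is controlling the exceptional primes uniformly in $d_0$: an unconditional proof requires a strong form of uniform boundedness for cyclic isogenies of prime degree on non-CM elliptic curves over number fields of degree $d_0$. For $d_0 = 1$ this is Mazur's theorem, which combines with Serre's open image theorem to drive Theorem \ref{THM0}; for $d_0 \ge 2$ only partial or conditional results are known (under GRH, or restricting the arithmetic of $F_0$), and these limitations manifest precisely as the unconditional/conditional distinctions announced in the abstract. A secondary issue is ensuring the per-prime bound holds with constants independent of $E_0$; this requires a uniform-in-$j$ Serre-type open image theorem for non-CM elliptic curves, available in weaker forms sufficient for the argument once the isogeny input is supplied.
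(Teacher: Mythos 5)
The statement you are trying to prove is labeled a \emph{conjecture} in the paper, and the authors explicitly state that they cannot prove it unconditionally for any $d_0 \ge 2$. What the paper actually proves is Theorem~\ref{THM0} (the case $d_0 = 1$, via Mazur's isogeny theorem), and Theorem~\ref{THM3}, which is Conjecture~\ref{CONJ0} \emph{conditional} on the hypothesis $\operatorname{SI}(d_0)$ that $X_0(\ell)$ has no noncuspidal non-CM points of degree $d_0$ for $\ell$ large. You correctly diagnose that the sticking point is uniform boundedness of rational cyclic $\ell$-isogenies over degree-$d_0$ fields, which is precisely $\operatorname{SI}(d_0)$, so your identification of the conditional input matches the paper's.

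However, there is a genuine gap in the central step of your argument, even granting $\operatorname{SI}(d_0)$. You establish a per-prime bound: for each prime $\ell$ with large image, a point of order $\ell^{n_\ell}$ in $E_0(F)$ forces $\ell^{2n_\ell - 2}(\ell^2 - 1) \le [F:F_0]$, hence $\ell^{n_\ell} = O([F:F_0]^{1/2})$. You then say ``summing the per-prime bounds \ldots produces a polynomial bound on $\exp E_0(F)[\tors]$,'' but the exponent is the \emph{product} $\prod_\ell \ell^{n_\ell}$, and knowing that each factor is $O(d^{1/2})$ bounds the product only by something of shape $d^{O(d^{1/2}/\log d)}$, since the number of primes $\ell \le d^{1/2}$ with $n_\ell \ge 1$ can itself grow like $d^{1/2}/\log d$. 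Nor does the divisibility $\ell^{2n_\ell-2}(\ell^2-1) \mid [F:F_0]$ salvage this, because the factors $\ell^2-1$ for different $\ell$ are not coprime and the fields $F_0(P_\ell)$ can overlap inside $F$. The paper's proof of Theorem~\ref{THM3} avoids this trap by a genuinely different mechanism: it invokes Lozano-Robledo's ramification theorem (Theorem~\ref{BIGALVAROTHM}, via Corollary~\ref{KEYCORA}) to produce, for each large $\ell_i$, a prime of $F_0(P_i)$ with ramification index over $\ell_i$ of size roughly $\ell_i^{b_i}$; it then builds a tower $K_0 = F_0(E[12]) \subset K_1 \subset \cdots \subset K_k$ by successively adjoining the $P_i$, and uses Lemma~\ref{LEMMA1} (ramification of $\ell$-torsion of a semistable curve at primes away from $\ell$ is at most $\ell$) to show these ramification contributions are essentially coprime across the tower. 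This lets the degrees $[K_i:K_{i-1}]$ be bounded \emph{below} multiplicatively, producing a bound on the entire product $\prod_i \ell_i^{b_i}$ against $[K_k:F_0] \le 4608d$. That ramification bootstrapping is the key idea your proposal is missing.

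A secondary point: you finish by applying \eqref{CRUDEEQ}, which would give $\# E(F)[\tors] = O(d^{2B})$ from $\exp E(F)[\tors] = O(d^B)$. The paper improves this to $O(d^{B+1+\epsilon})$ by using the Weil pairing: full $a$-torsion over $F$ forces $\Q(\mu_a) \subseteq F$, so $a = O_\epsilon(d^{1+\epsilon})$, giving the exponent $5/2+\epsilon$ rather than $3+\epsilon$ in Theorems~\ref{THM1}--\ref{THM3}.
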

\noindent
Theorem \ref{THM0} gives the case $d_0 = 1$ of Conjecture \ref{CONJ0}.  At
present we cannot prove Conjecture \ref{CONJ0} unconditionally for any $d_0 \geq 2$, but we can make some
progress in this direction, as shown by the following results.

\begin{thm}
\label{THM1}
Let $F_0$ be a quadratic number field that is not imaginary quadratic of class number one.  (Thus,
the discriminant of $F_0$ is not one of $-3,-4,-7,-8,-11,$\\ $-19,-43,-67,-163$.)  Then for all $\epsilon > 0$, there is $C= C(\epsilon,F_0)$ such that if $F$ is a degree $d$ number field and $E_{/F}$
is an elliptic curve with $j(E) \in F_0$, we have
\[ \exp E(F)[\tors] \leq C d^{3/2+\epsilon}\quad\text{and}\quad\# E(F)[\tors] \leq C d^{5/2+\epsilon}. \]
\end{thm}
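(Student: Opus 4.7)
The plan is to reduce Theorem~\ref{THM1} to a uniform variant of Theorem~\ref{THM0} over $F_0$, then to attack the resulting problem by a prime-by-prime analysis of the exponent, and finally to upgrade to a bound on the size via the Weil pairing. First I would twist: choose an auxiliary elliptic curve $E_0/F_0$ with $j(E_0) = j(E)$, observe that $E$ is a quadratic (or, if $j \in \{0,1728\}$, quartic or sextic) twist of $E_0$, and hence
\[
\exp E(F)[\tors] \le c_j \cdot \exp E_0(L)[\tors],
\]
with $L = F \cdot F_0$ of degree at most $2d$ over $\Q$ and $c_j \in \{2,4,6\}$. If $E_0$ has CM, then the hypothesis that $F_0$ is not imaginary quadratic of class number one forces $j(E_0) \in \Q$, reducing to Theorem~\ref{THM0}; so henceforth assume $E_0$ is non-CM. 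The task becomes to prove the uniform exponent bound $\exp E_0(L)[\tors] \le C(F_0,\epsilon)[L:\Q]^{3/2+\epsilon}$ for any non-CM $E_0/F_0$ and any $L/F_0$.

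For this uniform bound, decompose $\exp E_0(L)[\tors] = \prod_\ell \ell^{n_\ell}$ and bound each prime power separately via a dichotomy. For small primes $\ell$ (below a threshold $B_0 = B_0(F_0)$), a rational point of order $\ell^{n_\ell}$ in $E_0(L)$ gives a closed point of $X_1(\ell^{n_\ell})$ of degree at most $2d$; Abramovich's gonality estimate $\gamma_\C(X_1(\ell^{n_\ell})) \gg \ell^{2n_\ell}$ together with Frey's gonality theorem then forces $\ell^{n_\ell} \ll \sqrt{d}$. For large primes $\ell > B_0$, the critical input is a strong uniform isogeny bound: for $F_0$ as in the theorem, no non-CM elliptic curve over $F_0$ admits an $F_0$-rational cyclic $\ell$-isogeny for $\ell > B_0(F_0)$. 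This is Momose's theorem together with its quadratic-field refinements (by Bilu--Parent--Rebolledo and others), and the condition on $F_0$ is precisely the hypothesis under which these proofs succeed. Given this, the isogeny character $\chi\colon \gg_{F_0} \to (\Z/\ell^{n_\ell})^\times$ giving the Galois action on $\langle P \rangle$ cannot be trivial, and its image order divides $[L:F_0] \le d$. Combined with the determinant identity expressing $\chi_{\cyc}$ as $\chi$ times the complementary character, and the structural analysis of isogeny characters (writing $\chi$ as a cyclotomic power twisted by a finite-order character), this yields $\ell^{n_\ell} \le C d^{3/2+\epsilon}$.

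To pass from exponent to cardinality, decompose $E_0(L)[\tors] \cong \bigoplus_\ell (\Z/\ell^{a_\ell} \oplus \Z/\ell^{b_\ell})$ with $a_\ell \le b_\ell$, and set $N = \prod_\ell \ell^{a_\ell}$. Since $E_0[\ell^{a_\ell}] \subseteq E_0(L)$ for each $\ell$, the Weil pairing places $\mu_N$ inside $L$, so $\phi(N) \le 2d$. Mertens' theorem then gives $N \le (e^\gamma + o(1)) \phi(N) \log\log N = O(d^{1+\epsilon})$, and hence
\[
\# E(F)[\tors] \le N \cdot \exp E(F)[\tors] = O(d^{5/2+\epsilon}).
\]

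The main obstacle is extracting the sharp polynomial exponent $3/2$ (not merely a polynomial bound) from the Momose-style isogeny character analysis. Unlike the $F_0 = \Q$ setting of Theorem~\ref{THM0}, where Mazur's explicit isogeny classification provides the cleanest possible input, the quadratic case requires careful bookkeeping between the cyclotomic character of $F_0$ and the isogeny character, and it is precisely the failure of this bookkeeping in the excluded fields---where residual CM phenomena can pollute the Galois image---that prevents us from treating them by the same argument.
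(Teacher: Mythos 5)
Your high-level outline (twist to reduce to curves over $F_0$, split the exponent into small and large primes, then upgrade via the Weil pairing/cyclotomic bound) matches the skeleton of the paper's proof. The exponent-to-order step is essentially identical to \S3.3. However, the core of the argument -- the treatment of the $\ell$-primary parts -- has a genuine structural gap.

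\textbf{The prime-by-prime bounds do not combine.} You decompose $\exp E_0(L)[\tors] = \prod_\ell \ell^{n_\ell}$ and then assert bounds on each factor separately: $\ell^{n_\ell} \ll \sqrt{d}$ for small $\ell$, and $\ell^{n_\ell} \le C d^{3/2+\epsilon}$ for large $\ell$. But the quantity that must be bounded is the \emph{product}, and there is no a priori bound on the number of primes $\ell$ with $n_\ell > 0$. If, say, ten large primes each contribute a factor of size $d^{3/2}$, your per-prime estimates give $d^{15}$, not $d^{3/2+\epsilon}$. (Even for the small primes the same issue appears: you write $X_1(\ell^{n_\ell})$ rather than $X_1(N)$ for the full smooth part $N$; the gonality of $X_1(N)$ does control the product, but your formulation only controls each factor.) The paper's proof is engineered precisely to handle this. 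Theorem~\ref{ARAICOR} gives a bound $O(d^{1/2})$ on the product over \emph{all} primes below $\ell_0$ at once, not one prime at a time. For the large primes, the paper builds the tower $K_{-1} = F_0 \subset K_0 = F_0(E[12]) \subset K_1 \subset \cdots \subset K_k \subset F(E[12])$ and uses Lemma~\ref{LEMMA1} (tame ramification of $\ell'$-torsion after semistable reduction) to show that the ramification contributions at distinct large primes are essentially independent; multiplying the degrees $[K_i:K_{i-1}]$ along the tower then bounds the \emph{product} $\prod_{\ell > \ell_0}\ell^{b_\ell}$ by $O(d^{1+\epsilon})$. Your sketch has no analogue of this step.

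\textbf{The key technical input is absent.} Momose's isogeny theorem tells you that for $\ell > \ell_0(F_0)$ there is no $F_0$-rational cyclic $\ell$-isogeny, but that alone does not produce the needed lower bound on $[F_0(P):F_0]$ for a point $P$ of order $\ell^n$. The paper uses this only as the hypothesis to feed into Lozano-Robledo's ramification theorem (Theorem~\ref{BIGALVAROTHM}/Corollary~\ref{KEYCORB}), which produces a prime $\mathcal{L}$ of $F_0(P)$ above $\ell$ with ramification index divisible by $\varphi(\ell^n)/\gcd(\varphi(\ell^n),c)$ or $\ell^n$, $c \le 12 d_0$. It is this ramification lower bound -- together with its compatibility with the ``orthogonal'' (power-of-$\ell_j$) ramification coming from the other large primes -- that drives the whole argument. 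Your ``isogeny character analysis'' is too vague to substitute for it: an isogeny character for $\langle P \rangle$ is not defined over $F_0$ unless $\langle P \rangle$ is $\gg_{F_0}$-stable, which is exactly what Momose rules out; and the claim that its image has order dividing $[L:F_0]$ does not follow.

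\textbf{A smaller error.} Your CM reduction is incorrect: the hypothesis ``$F_0$ not imaginary quadratic of class number one'' does \emph{not} force CM $j$-invariants in $F_0$ to be rational. Any real quadratic field arising as $\Q(j)$ for a CM order of class number $2$ (e.g.\ discriminant $-20$) is allowed by the hypothesis yet contains a non-rational CM $j$-invariant. Fortunately this is harmless because, as the paper notes, CM curves satisfy much stronger bounds (Silverberg, Hindry--Silverman, Clark--Pollack), so they can simply be set aside -- but the reason is not the one you gave.
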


\begin{thm}
\label{THM2}
Assume the Generalized Riemann Hypothesis (GRH).\footnote{Throughout, we use GRH to mean the Riemann Hypothesis for all Dedekind zeta functions.}  Let $F_0$ be a number field that does not contain the Hilbert
class field of any imaginary quadratic field.  Then for all $\epsilon > 0$, there is $C = C(\epsilon,F_0)$ such that if $F$ is a degree $d$ number field and $E_{/F}$
is an elliptic curve with $j(E) \in F_0$, we have
\[ \exp E(F)[\tors] \leq C d^{3/2+\epsilon}\quad\text{and}\quad\# E(F)[\tors] \leq C d^{5/2+\epsilon}. \]
\end{thm}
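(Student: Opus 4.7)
The plan is to adapt the strategies of Theorems \ref{THM0} and \ref{THM1}, replacing the explicit arithmetic of rational or quadratic points on modular curves by GRH-conditional bounds of Larson-Vaintrob type on the degree of cyclic isogenies of non-CM elliptic curves over number fields of varying degree.

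First, I would dispose of the CM case. If $E/F$ has CM by an order in an imaginary quadratic field $K$, the hypothesis $H_K \not\subseteq F_0$ forces $F$ to contain a proper quadratic extension of $\Q(j(E))$, and the sharp CM torsion bounds of Silverberg and Clark-Pollack then yield $\exp E(F)[\tors] = O(d \log\log d)$, well within the target. Assume now that $E$ is non-CM, and let $E_0/F_0$ be a model with $j(E_0) = j(E)$ (which exists since $j(E) \in F_0$). Setting $L := F\cdot F_0$ (of degree at most $d\cdot[F_0:\Q]$), $E$ is a quadratic twist of $(E_0)_L$; in particular a point of order $N:=\exp E(F)[\tors]$ gives, after passing to the at-most-quadratic twisting extension $L'$ of $L$, a point of $E_0$ of order $N$ defined over $L'$, hence an $L'$-rational cyclic $N$-isogeny of $E_0$.

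I would split $N = N_{\mathrm{good}}\cdot N_{\mathrm{bad}}$, where $N_{\mathrm{good}}$ is the product of $\ell$-parts of $N$ for primes $\ell$ such that $\rho_{E_0,\ell}(\gg_{F_0}) \supseteq \SL_2(\mathbb{F}_\ell)$. For the good primes, Goursat's lemma (applicable since $E_0$ is non-CM) shows that the combined mod-$N_{\mathrm{good}}$ image contains $\SL_2(\Z/N_{\mathrm{good}})$; since the determinant is the cyclotomic character, whose image has index at most $[F_0:\Q]$ in $(\Z/N_{\mathrm{good}})^\times$, the mod-$N_{\mathrm{good}}$ image has size at least $c(F_0)\cdot N_{\mathrm{good}}^4$. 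The orbit of the $N_{\mathrm{good}}$-torsion point under this image has size $\geq c\cdot N_{\mathrm{good}}^2$; restricting to $\gg_{L'}$ loses a factor of at most $[L':F_0] \leq Cd$, so the orbit under $\gg_{L'}$ has size at least $c\cdot N_{\mathrm{good}}^2/d$. This must equal $1$ since the point is $L'$-rational, giving $N_{\mathrm{good}} \leq C(F_0)\sqrt{d}$. For the bad primes, I would invoke the main GRH input: under GRH, any prime $\ell$ such that a non-CM $E'/F'$ with $F' \supseteq F_0$ of degree $d'$ admits an $F'$-rational $\ell$-isogeny (or more generally has a non-surjective mod-$\ell$ image outside the normalizer-of-Cartan or exceptional cases) satisfies $\ell \leq C(F_0)\cdot (d')^A$; this is a GRH-conditional generalization of Larson-Vaintrob for reducible images, together with Bilu-Parent-Rebolledo-type bounds for normalizer-of-Cartan and exceptional images. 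Extending to $\ell^k$-isogenies by analyzing the field of definition of the full kernel and summing over bad primes yields $N_{\mathrm{bad}} \leq C d^{1+\epsilon}$.

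Combining, $N \leq C\cdot d^{3/2 + \epsilon}$. The size bound follows by the standard Weil pairing argument: writing $E(F)[\tors] \cong \Z/a \times \Z/b$ with $a \mid b$, the pairing forces $\mu_a \subseteq F$, so $\phi(a) \leq d$, giving $a \leq C d^{1+\epsilon}$, whence $\#E(F)[\tors] = ab \leq C d^{5/2+\epsilon}$ after adjusting $\epsilon$. The main obstacle will be the uniform-in-$j(E_0)$ GRH-conditional bound on $N_{\mathrm{bad}}$ with polynomial exponent at most $1+\epsilon$: the prime-degree Larson-Vaintrob bound is the natural starting point, but extending to prime-power isogeny degrees while maintaining uniformity in $j(E_0) \in F_0$ is delicate, and the analogous uniform analysis of normalizer-of-Cartan images is essentially a GRH-conditional generalization of the quadratic-field argument used in the proof of Theorem \ref{THM1}.
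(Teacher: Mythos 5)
Your decomposition $N = N_{\mathrm{good}}\cdot N_{\mathrm{bad}}$, based on whether $\rho_{E_0,\ell}(\gg_{F_0}) \supseteq \SL_2(\mathbb{F}_\ell)$, does not align with what Larson--Vaintrob controls, and this is where the argument breaks. Under GRH, Larson--Vaintrob bounds the primes $\ell$ for which some $E_0/F_0$ admits an $F_0$-rational $\ell$-isogeny, i.e.\ the primes where the mod-$\ell$ image is contained in a Borel. But your ``bad'' set also contains primes where the image is contained in the normalizer of a (split or nonsplit) Cartan subgroup or is exceptional, and for these there is no GRH-conditional bound that is uniform over all non-CM $E_0/F_0$ (the Bilu--Parent--Rebolledo results you cite are specific to $\Q$ and do not address the nonsplit case over general $F_0$ at all). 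So the set of bad primes is not a priori finite as $E_0$ varies, and the proposed bound $N_{\mathrm{bad}} \le Cd^{1+\epsilon}$ cannot get started. Compounding this, your treatment of $N_{\mathrm{bad}}$ appeals to a bound on isogeny primes for curves over extensions $F' \supseteq F_0$ of varying degree $d'$, with the prime allowed to grow polynomially in $d'$ --- this is essentially a strong uniform isogeny theorem, which is precisely the hypothesis $\operatorname{SI}(d_0)$ that the paper assumes in Theorem~\ref{THM3} but does \emph{not} have available in the GRH-conditional Theorem~\ref{THM2}.

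The paper's route is structurally different and avoids this. It splits the primes by size ($\ell \leq \ell_0$ vs.\ $\ell > \ell_0$, where $\ell_0 = \ell_0(F_0)$ is the finite Larson--Vaintrob bound), not by image type. For $\ell \leq \ell_0$, a fixed finite set, Theorem~\ref{ARAICOR} (via the strong Arai theorem) gives the $O(d^{1/2})$ contribution. For $\ell > \ell_0$, no claim is made about the mod-$\ell$ image; instead the Lozano-Robledo ramification theorem (Theorem~\ref{BIGALVAROTHM}, which needs only that $E_0$ have no $F_0$-rational $\ell$-isogeny, exactly what Larson--Vaintrob supplies) converts an $F$-rational point of order $\ell^n$ into a large ramification index in $F_0(P)/F_0 \subseteq F/F_0$. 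The multiplicativity of these ramification indices across different primes is then extracted by introducing the tower $K_{-1} = F_0$, $K_0 = F_0(E[12])$, $K_i = K_{i-1}(P_i)$ with the $\ell_i$ in decreasing order --- this step has no analogue in your sketch and is where the $O(d^{1+\epsilon})$ bound on the large-prime contribution actually comes from. Your $N_{\mathrm{good}}$ orbit-stabilizer argument, with the $\ell\in\{2,3\}$ and mod-$\ell$-to-$\ell$-adic lifting issues patched, would plausibly give an alternative to the Arai input, and your passage from exponent to order via the Weil pairing matches the paper's \S3.3; but the core of the proof --- the large-prime argument --- is missing.
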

\noindent
For $d_0 \in \Z^+$, we introduce a hypothesis $\operatorname{SI}(d_0)$ defined as follows.
\[ \operatorname{SI}(d_0)\!:\quad\text{\begin{minipage}[t]{30em}
There is prime $\ell_0 = \ell_0(d_0)$ such that for all primes $\ell > \ell_0$, the modular curve $X_0(\ell)$ has 
no noncuspidal non-CM points of degree $d_0$.  
\end{minipage}} \]


\begin{thm}
\label{THM3}
If $\operatorname{SI}(d_0)$ holds, then for all $\epsilon > 0$, there is $C = C(\epsilon,d_0)$ with
\[ T_{d_0}(d) \le C d^{\frac{5}{2}+\epsilon}. \]
\end{thm}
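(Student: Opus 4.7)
The plan is to reduce the desired bound $\#E(F)[\tors]\leq C_\epsilon d^{5/2+\epsilon}$ to the exponent bound $\exp E(F)[\tors] \leq C_\epsilon d^{3/2+\epsilon}$ via the Weil pairing, and then to establish the exponent bound one prime at a time using $\SI(d_0)$. Writing $E(F)[\tors]\cong \Z/a\Z\times\Z/b\Z$ with $a\mid b$, so that $b = \exp E(F)[\tors]$, the Weil pairing forces $\mu_a\subseteq F$; hence $\varphi(a)\leq d$ and so $a\leq C_\epsilon d^{1+\epsilon}$ by standard estimates on $\varphi^{-1}$. Therefore $\#E(F)[\tors] = ab \leq C_\epsilon d^{1+\epsilon}\cdot\exp E(F)[\tors]$, reducing the order bound to the exponent bound.

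For the exponent bound, first discard the CM case, which is treated in \cite{CP15,CP17} with a stronger bound. Since $j(E)\in F_0$, pick a model $E_0$ of $E$ over $F_0$ (handling $j\in\{0,1728\}$ separately) so that $E$ is a twist of $(E_0)_{/F}$, and work with the representations $\rho_{E_0,\ell^n}\colon \gg_{F_0}\to \GL_2(\Z/\ell^n\Z)$. Fix a prime $\ell>\ell_0(d_0)$ with $\ell^n$ dividing $\exp E(F)[\tors]$ and let $P\in E(F)$ have order $\ell^n$. The cyclic subgroup $\langle P[\ell]\rangle$ is $F$-rational of order $\ell$; by $\SI(d_0)$ it cannot be $F_0$-rational, as that would give a non-CM noncuspidal degree-$d_0$ point on $X_0(\ell)$. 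Consequently $G_\ell\coloneqq\rho_{E_0,\ell}(\gg_{F_0})$ has no fixed line in $\mathbb{P}(E_0[\ell])$. By Dickson's classification (and since exceptional subgroups only arise for bounded $\ell$), either $G_\ell\supseteq \SL_2(\mathbb{F}_\ell)$ or $G_\ell$ is contained in the normalizer of a Cartan subgroup. In the large-image case, index counting -- the stabilizer of a vector of order $\ell^n$ in $\GL_2(\Z/\ell^n\Z)$ has index $\asymp \ell^{2n}$, while $[\gg_{F_0}:\gg_F]\leq d/d_0$ -- forces $\ell^n \leq C\sqrt{d}$. Small primes $\ell\leq \ell_0(d_0)$ contribute only a $d_0$-bounded factor.

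The principal obstacle is the Cartan-normalizer case, which $\SI(d_0)$ does not directly address. The natural strategy there is to descend to the (at most quadratic) extension $F_0'/F_0$ over which the image is contained in the Cartan itself; the resulting abelian action on $E_0[\ell^n]$ is then amenable to class-field-theoretic analysis (in the spirit of the CM case but with the ``CM field'' varying with $\ell$), yielding a weaker bound of the shape $\ell^n \leq C\cdot d/d_0$. The final step is to combine these per-prime bounds into the global exponent bound $\exp E(F)[\tors]\leq C_\epsilon d^{3/2+\epsilon}$: the constraint $\mu_a\subseteq F$ limits how many primes can simultaneously contribute large factors, and careful bookkeeping converts the per-prime bounds of shapes $\sqrt{d}$ and $d$ into the claimed global bound. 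This balancing -- rather than losing a factor prime by prime -- is the most delicate portion of the argument.
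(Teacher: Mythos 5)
Your reduction from the order bound to the exponent bound via the Weil pairing is exactly the argument in the paper's \S3.3 and is correct. Beyond that point, however, the proposal diverges from the paper's approach and leaves gaps that the paper must work rather hard to close.

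First, the small-prime contribution is not ``a $d_0$-bounded factor''; it genuinely grows with $d$. Even with $\rho_{E_0,\ell}$ surjective, one can acquire an $\ell^n$-torsion point over an extension of degree roughly $\ell^{2n}$, so for each fixed small prime the best one can hope for is $\ell^{a_\ell}\lesssim d^{1/2}$, and this is exactly what the paper proves in Theorem~\ref{ARAICOR}. That bound is \emph{not} free: it requires the strong Arai theorem (Theorem~\ref{STRONGARAITHM}), a uniform $\ell$-adic open image statement over all degree-$d_0$ number fields, which occupies all of \S2 and is one of the main contributions of the paper. Your sketch does not mention, let alone supply, this ingredient. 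This is also why the exponent in the theorem is $3/2+\epsilon$ (hence $5/2+\epsilon$ after the Weil-pairing step) rather than $1+\epsilon$: the $d^{1/2}$ from small primes multiplies a $d^{1+\epsilon}$ from large primes.

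Second, the Cartan-normalizer case is not an incidental obstacle you can defer; it is the crux, and the paper does not handle it by anything like a Dickson dichotomy. Instead, the paper feeds $\SI(d_0)$ into Lozano-Robledo's ramification theorem (Theorem~\ref{BIGALVAROTHM} $\Rightarrow$ Corollary~\ref{KEYCORA}): if there is no $F_0$-rational cyclic $\ell$-isogeny and $P$ has order $\ell^n$, then some prime of $F_0(P)$ above $\ell$ has ramification index essentially $\geq \ell^n/c$ with $c\leq 12d_0$. This statement is uniform and does not split into cases according to the shape of the mod-$\ell$ image; it handles the split and nonsplit Cartan-normalizer cases automatically. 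Your proposed class-field-theoretic analysis of the Cartan case with the ``CM field varying with $\ell$'' is not developed, and there is no indication it would yield a per-prime bound good enough to combine.

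Third, and most seriously, the ``careful bookkeeping'' that combines per-prime bounds is where the proposal has no argument at all. A per-prime bound $\ell_i^{b_i}\lesssim \sqrt{d}$ multiplied over an unbounded number of primes gives nothing useful. The paper's mechanism is a tower argument: order the large primes as $\ell_1>\ell_2>\cdots>\ell_k$, adjoin the points one at a time to build $K_0=F_0(E[12])\subset K_1\subset\cdots\subset K_k\subset F(E[12])$, and show via Lemma~\ref{LEMMA1} (semistable reduction $\Rightarrow$ $e(F(E[\ell])/F)\mid\ell$) that when passing from $K_{i-1}$ to $K_i$ the ramification at $\ell_i$ forced by Lozano-Robledo cannot have been absorbed by the earlier steps, because those steps only introduce $\ell_j$-power ramification at $\ell_i$ for $j<i$ (distinct primes). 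This near-orthogonality is what lets the bound $\prod_i \ell_i^{b_i} \lesssim d^{1+\epsilon}$ go through. Nothing in your sketch plays this role, and without it the per-prime estimates do not assemble into a global bound.

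In summary: the Weil-pairing reduction is right and matches the paper; the $\SI(d_0)\Rightarrow$ no-fixed-line observation is right; but the proposal is missing the strong Arai theorem for small primes, replaces the Lozano-Robledo ramification input with an incomplete case analysis, and has no mechanism for combining the primes -- the last of which is where the real content of the proof lies.
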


\begin{remark}
This paper is cognate to another work \cite{CMP17}, written in parallel, giving ``typical bounds'' on $\# E(F)[\tors]$
for an elliptic curve $E_{/F}$, under the same hypotheses as Theorems \ref{THM1}, \ref{THM2} and \ref{THM3}.
\end{remark}

\subsection{Strategy of the proofs}
\noindent
In \cite{Arai08}, Arai showed that for each fixed prime $\ell$ and number field $F$, as we range over all non-CM elliptic
curves $E_{/F}$ there is a uniform upper bound on the index of the image of
the $\ell$-adic Galois representation.  In $\S$2 we prove the \emph{strong} form of this theorem by showing that
the conclusion still holds as we range over all non-CM elliptic curves defined over all number fields
of any fixed degree (Theorem \ref{STRONGARAITHM}).
\\ \indent  A uniform upper bound on the index of the \emph{adelic} Galois representation as we range over all non-CM elliptic
curves defined over number fields of fixed degree $d_0$ would be -- to say the least! -- desirable.  It implies $\operatorname{SI}(d_0)$ but is so much stronger that the $d_0 = 1$ case was raised as an open problem
in \cite{Serre72}, has guided most subsequent work in the field, and remains open.  Our approach to the $\ell$-adic
version exploits the finiteness properties that $\GL_2(\Z_{\ell})$ enjoys by virtue of being an $\ell$-adic
analytic group -- finiteness properties that $\GL_2(\widehat{\Z})$ certainly does not possess.  \\ \indent
From Theorem \ref{STRONGARAITHM} we deduce Theorem \ref{ARAICOR}: for each finite set of primes $\mathfrak{s}$, the quantity $\exp E(F)[\mathfrak{s}^{\infty}]$ is bounded by a polynomial in $[F:\Q(j(E))]$.  Thus in order to bound $\exp E(F)[\tors]$ it suffices to bound $\exp E(F)[\ell^{\infty}]$
for all \emph{sufficiently large} primes.
\\ \\
The next ingredient is the following striking recent result.

\begin{thm}[Lozano-Robledo]
\label{BIGALVAROTHM}
Let $\ell > 2$ be a prime.  Let $F_0$ be a number field, $\mathfrak{l}$ a prime ideal of $\Z_{F_0}$ lying over $\ell$, and $e(\mathfrak{l}/\ell)$ the ramification index.  Let $E_{/F_0}$ be a non-CM
elliptic curve, and let $a \in \Z^+$ be such that $E$ admits no $F_0$-rational cyclic isogeny of degree $\ell^a$.
Let $n \geq a$, and let $P \in E(\overline{F_0})$ have order $\ell^n$.  Then there is an integer
$1 \leq c \leq 12 e(\mathfrak{l}/\ell)$ and a prime $\mathcal{L}$ of $F_0(P)$ lying over $\mathfrak{l}$ such that
the ramification index $e(\mathcal{L}/\mathfrak{l})$ is divisible by either $\frac{\varphi(\ell^n)}{\gcd(\varphi(\ell^n),c \ell^{a-1})}$ or by $\ell^{n-a+1}$.
\end{thm}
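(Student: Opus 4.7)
The plan is to localize the problem at $\mathfrak{l}$: passing to the completion $K := (F_0)_{\mathfrak{l}}$ and choosing $\mathcal{L}$ appropriately, we have $e(\mathcal{L}/\mathfrak{l}) = e(K(P)/K)$, so the theorem reduces to a statement about the $\ell$-adic Galois representation $\rho\colon \gg_K \to \Aut(T_\ell E) \cong \GL_2(\Z_\ell)$. Since $K(P) \subseteq K(E[\ell^n])$ and the Galois group of the latter embeds into $\GL_2(\Z/\ell^n)$ via the action on $E[\ell^n]$, the ramification index $e(K(P)/K)$ equals the size of the $I_K$-orbit of $P$ inside $E[\ell^n]$.

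I would then analyze this orbit by splitting into cases according to the reduction type of $E$ at $\mathfrak{l}$. Since $\ell > 2$, a standard twisting/semistability argument produces a tamely ramified extension $K'/K$ of degree dividing $12$ over which $E$ acquires either multiplicative or good reduction; this accounts for the factor $12\, e(\mathfrak{l}/\ell)$ bounding $c$. In the potentially multiplicative case, Tate's uniformization realizes $E(\overline{K'}) = \overline{K'}^{\times} / q^{\Z}$ and expresses $P$ as the image of $\zeta_{\ell^n}^{s}\, q^{t/\ell^n}$ for some $0 \leq s,t < \ell^n$, so $K'(P)$ is contained in $K'(\zeta_{\ell^n}, q^{1/\ell^n})$ and its ramification over $K'$ splits as a cyclotomic contribution of order $\varphi(\ell^n)$ (possibly divided by an integer of size at most $c\ell^{a-1}$) and a Kummer contribution of order $\ell^{n - v_\ell(t)}$. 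In the potentially good case, Serre--Tate theory describes the image of $I_{K'}$ on $T_\ell E$ as lying in a split Cartan containing the cyclotomic character $\chi_\ell$ (ordinary reduction) or a non-split Cartan whose restriction to inertia still equals $\chi_\ell$ (supersingular reduction), again partitioning the orbit of $P$ into a cyclotomic and a complementary piece.

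The hypothesis that $E$ admits no $F_0$-rational cyclic isogeny of degree $\ell^a$ enters as a lower bound on the complementary (Kummer or anti-diagonal) coordinate of $P$: if its $\ell$-adic valuation were $\geq a$, then a suitable multiple of $\langle P \rangle$ together with a locally stable line would descend to a $\gg_{F_0}$-stable cyclic $\ell^a$-subgroup, contradicting the hypothesis. Hence the complementary coordinate has $\ell$-adic valuation at most $a-1$, which forces $e(K(P)/K)$ to be divisible by either $\varphi(\ell^n)/\gcd(\varphi(\ell^n), c\ell^{a-1})$ or by $\ell^{n-a+1}$. The main difficulty I expect is uniform control of $c$ across all reduction types and the passage from the \emph{global} isogeny hypothesis to the \emph{local} coordinates of $P$; in particular, one must argue that a locally decomposable but globally irreducible situation still forces the Kummer-type coordinate of $P$ to be non-degenerate to the required extent, which will require combining the non-CM hypothesis with the explicit Serre--Tate and Tate-uniformization descriptions above.
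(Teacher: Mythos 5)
The paper does not actually prove this theorem: it is quoted directly from Lozano-Robledo, and the entirety of the ``proof'' in the paper is the citation to \cite[Thm.~2.1]{UNIFORM}. So there is no internal argument to compare your sketch against; what you have attempted is a reconstruction of Lozano-Robledo's own argument.

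Your sketch does capture the correct broad outline of that argument: localize at $\mathfrak{l}$, pass to a tame extension of degree dividing $12\,e(\mathfrak{l}/\ell)$ to reach a semistable model (this is exactly where the bound on $c$ originates), and then read off ramification from Tate uniformization in the potentially multiplicative case or from the structure of the $\ell$-divisible group in the potentially good case, decomposing the orbit of $P$ into a cyclotomic piece and a complementary piece. That is the right skeleton, and you are also right that the serious work is in the step you flag at the end.

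That flagged step, however, is a genuine gap as you have stated it, and your proposed resolution does not work. You write that if the complementary coordinate of $P$ had $\ell$-adic valuation $\geq a$, then ``a locally stable line would descend to a $\gg_{F_0}$-stable cyclic $\ell^a$-subgroup.'' This is false in general: the filtration line provided by the Tate curve or by the connected--\'etale sequence is stable under $\gg_{(F_0)_{\mathfrak{l}}}$, and a subgroup of $E[\ell^a]$ stable under a decomposition group need not be stable under the full $\gg_{F_0}$. Lozano-Robledo's actual argument does not globalize the line; rather, it compares two lower bounds on $[F_0(P):F_0]$ -- one coming from the local ramification at $\mathfrak{l}$ and one coming from the size of the $\gg_{F_0}$-orbit of the cyclic subgroup $\langle P \rangle$, which the isogeny hypothesis makes large -- and plays them against the structure of the inertia image to force the stated divisibility. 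You would need to replace ``the line descends'' with this orbit-size bookkeeping. A secondary issue: in the potentially good supersingular case, your description ``a non-split Cartan whose restriction to inertia still equals $\chi_\ell$'' is not correct; the tame inertia image is cyclic of order $\ell^2 - 1$ given by the level-$2$ fundamental (Lubin--Tate) character, whose \emph{norm}, not restriction, is $\chi_\ell$. This affects exactly which cyclotomic-type quotient shows up in the $e(\mathcal{L}/\mathfrak{l})$ divisibility.
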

\begin{proof} This is a simplified form of \cite[Thm. 2.1]{UNIFORM}.
\end{proof}
\noindent
To apply Theorem \ref{BIGALVAROTHM} to get uniform bounds on $\exp E(F)[\tors]$, we need finiteness results for rational $\ell$-isogenies.  Thus Hypothesis $\operatorname{SI}(d_0)$ intervenes naturally.

\begin{cor}
\label{KEYCORA}
Let $d_0 \in \Z^+$, and assume hypothesis $\operatorname{SI}(d_0)$.  There is a prime $\ell_0 = \ell_0(d_0)$ such that:
for all number fields $F_0/\Q$ of degree $d_0$ and all primes $\ell > \ell_0$, if $\mathfrak{l}$ is a prime ideal of $\Z_{F_0}$
lying over $\ell$ and $E_{/F_0}$ is a non-CM elliptic curve, then for all $n \in \Z^+$, if
$P \in E(\overline{F_0})$ is a point of order $\ell^n$, then there is $1 \leq c \leq 12 d_0$ and a prime
$\mathcal{L}$ of $F_0(P)$ lying over $\mathfrak{l}$ such that $e(\mathcal{L}/\mathfrak{l})$ is divisible by either $\frac{\varphi(\ell^n)}{\gcd(\varphi(\ell^n),c)}$ or by $\ell^n$.
\end{cor}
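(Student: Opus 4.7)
The plan is to apply Theorem \ref{BIGALVAROTHM} with the minimal parameter choice $a=1$; the role of $\operatorname{SI}(d_0)$ is precisely to guarantee that $a=1$ is admissible for all sufficiently large $\ell$. With $a=1$ the two quantities in the conclusion of Theorem \ref{BIGALVAROTHM} simplify nicely: $c\ell^{a-1}$ collapses to $c$, and $\ell^{n-a+1}$ becomes $\ell^{n}$, matching the corollary's statement on the nose. The only other ingredient is the observation that any prime $\mathfrak{l}$ of $\Z_{F_0}$ above $\ell$ satisfies $e(\mathfrak{l}/\ell) \leq [F_0:\Q] = d_0$, so that the bound $c \leq 12 e(\mathfrak{l}/\ell)$ supplied by Theorem \ref{BIGALVAROTHM} sharpens to $c \leq 12 d_0$.

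The first step I would carry out is to translate $\operatorname{SI}(d_0)$ into the desired isogeny-absence statement. A non-CM elliptic curve $E_{/F_0}$ admits an $F_0$-rational cyclic $\ell$-isogeny if and only if the pair $(E,C)$, for $C$ the kernel of the isogeny, yields an $F_0$-rational point of $X_0(\ell)$; any such point contributes a closed noncuspidal non-CM point of $X_0(\ell)$ whose degree over $\Q$ divides $d_0$. I would therefore take $\ell_0(d_0)$ to be the maximum of the primes $\ell_0(d)$ furnished by $\operatorname{SI}(d)$ as $d$ runs over the (finitely many) divisors of $d_0$; then for every prime $\ell > \ell_0(d_0)$, no non-CM elliptic curve over a degree-$d_0$ number field admits an $F_0$-rational cyclic $\ell$-isogeny, and in particular $a = 1$ is admissible in Theorem \ref{BIGALVAROTHM}. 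Plugging this in, for any $n \in \Z^+$ and any $P \in E(\overline{F_0})$ of order $\ell^n$, we obtain $1 \leq c \leq 12 e(\mathfrak{l}/\ell) \leq 12 d_0$ and a prime $\mathcal{L}$ of $F_0(P)$ above $\mathfrak{l}$ such that $e(\mathcal{L}/\mathfrak{l})$ is divisible by $\varphi(\ell^n)/\gcd(\varphi(\ell^n),c)$ or by $\ell^n$, which is the claim.

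There is no substantive obstacle beyond the routine specialization just described; the corollary is essentially a direct consequence of Theorem \ref{BIGALVAROTHM} once one knows that $a=1$ suffices. The one point worth double-checking is the passage from $\operatorname{SI}(d_0)$ as literally stated (concerning closed points of exact degree $d_0$) to the absence of $F_0$-rational cyclic $\ell$-isogenies on $E$, since $F_0$-rational points of $X_0(\ell)$ yield closed points of any degree dividing $d_0$; this is handled cleanly by invoking $\operatorname{SI}(d)$ simultaneously for every divisor $d$ of $d_0$, as outlined above.
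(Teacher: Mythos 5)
Your proposal follows the same route as the paper: specialize Theorem \ref{BIGALVAROTHM} to $a = 1$ (justified by the absence of $F_0$-rational cyclic $\ell$-isogenies for $\ell > \ell_0$), whereupon $c\,\ell^{a-1}$ collapses to $c$ and $\ell^{n-a+1}$ becomes $\ell^n$, and then bound $c \le 12\,e(\mathfrak{l}/\ell) \le 12 d_0$. The paper's own proof is a one-liner citing exactly these two ingredients.

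The one place you go beyond the paper is the observation about closed-point degrees: an $F_0$-rational cyclic $\ell$-isogeny yields a noncuspidal non-CM closed point on $X_0(\ell)$ of degree \emph{dividing} $d_0$, not necessarily equal to $d_0$, so $\operatorname{SI}(d_0)$ as literally stated does not, on its own, force $a = 1$. Your fix --- taking $\ell_0$ large enough that $\operatorname{SI}(d)$ bites for every divisor $d$ of $d_0$ --- is the natural repair, but note that it strengthens the hypothesis beyond what the corollary literally assumes. It is worth observing that in the paper's actual application of this corollary (Step 0 of \S 3.2, in the proof of Theorem \ref{THM3}), one has reduced to elliptic curves $(E_0)_{/F_0}$ with $\Q(j(E_0)) = F_0$; in that case the residue field of the induced closed point on $X_0(\ell)$ is sandwiched between $\Q(j(E_0))$ and $F_0$ and hence equals $F_0$, so the closed point has degree exactly $d_0$ and $\operatorname{SI}(d_0)$ applies directly with no need for the divisors. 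So your concern is real as a matter of the corollary's stated generality, but is harmless in situ; still, the caution is well placed and would make the corollary robust if the hypothesis were read as $\operatorname{SI}(d)$ for all $d \mid d_0$.
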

\begin{proof}
This follows from Theorem \ref{BIGALVAROTHM} and the bound $e(\mathfrak{l}/l) \leq d_0$.
\end{proof}
\noindent
To proceed
without assuming $\operatorname{SI}(d_0)$ we need to restrict to weaker statements that are known or conditionally known.
We make use of the following prior results.

\begin{thm}[Mazur \cite{Mazur78}]
\label{MAZURTHM}
The hypothesis $\operatorname{SI}(1)$ holds with $\ell_0(1) = 37$.
\end{thm}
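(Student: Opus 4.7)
The plan is to follow Mazur's strategy via the Eisenstein quotient of the Jacobian $J_0(\ell)$. A noncuspidal $\Q$-point of $X_0(\ell)$ encodes a pair $(E,C)$ with $E/\Q$ an elliptic curve and $C \subset E$ a $\gg_\Q$-stable cyclic subgroup of order $\ell$, so the goal is to rule out such pairs whenever $E$ is non-CM and $\ell > 37$.

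First I would introduce the Hecke algebra $\mathbb{T} \subset \End(J_0(\ell))$ and its Eisenstein ideal
\[ I = (T_p - p - 1 : p \text{ prime},\ p \neq \ell) + (w_\ell + 1), \]
where $w_\ell$ denotes the Atkin--Lehner involution. The Eisenstein quotient $\tilde J$ of $J_0(\ell)$ is the largest quotient on which $I$ acts as zero. The deepest input -- and the main obstacle -- is Mazur's proof that $\tilde J(\Q)$ is finite, generated by the cuspidal class $[(0) - (\infty)]$, whose order divides the numerator of $(\ell-1)/12$. This rests on a delicate analysis of the N\'eron model of $J_0(\ell)$ at $\ell$, of supersingular divisors in characteristic $\ell$, and of Eichler--Shimura congruences.

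With this in hand I would study the Abel--Jacobi morphism $f \colon X_0(\ell) \ra \tilde J$, $Q \mapsto [Q - \infty]$. The key local step is to verify that $f$ is a \emph{formal immersion} at the cusp $\infty$ modulo a small auxiliary prime $p$ (e.g.\ $p = 2$); this amounts to checking the nonvanishing of certain $q$-expansion coefficients of weight-two cuspforms on $\Gamma_0(\ell)$ at $\infty$. For a noncuspidal $Q \in X_0(\ell)(\Q)$ the image $f(Q)$ lies in the finite group $\tilde J(\Q)$; reducing modulo $p$ and using the formal immersion property forces $Q \equiv \infty \pmod{p}$, and then a further application of the same argument lifts this to $Q = \infty$ -- a contradiction.

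A secondary technical hurdle is that the formal immersion criterion becomes delicate when $12 \mid \ell-1$, and in such cases one refines the argument by replacing $X_0(\ell)$ by a suitable symmetric power or by working modulo several auxiliary primes in concert. Once these ingredients are assembled, a direct case analysis shows that for every prime $\ell > 37$ the only noncuspidal $\Q$-points of $X_0(\ell)$ correspond to CM elliptic curves (these genuinely occur for $\ell \in \{43, 67, 163\}$), establishing $\SI(1)$ with $\ell_0(1) = 37$.
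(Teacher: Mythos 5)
The paper itself offers no proof of this statement---it is simply quoted from Mazur's 1978 paper \emph{Rational isogenies of prime degree}, and the citation is all the paper gives. Your sketch correctly identifies that paper as the source and reproduces the broad outline of Mazur's argument: the Eisenstein ideal, the finiteness of the Mordell--Weil group of the Eisenstein quotient $\widetilde{J}$, and a formal-immersion argument at a cusp to contract noncuspidal rational points.

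That said, the logical flow in the middle of your outline is garbled. The formal immersion property does \emph{not} ``force $Q \equiv \infty \pmod p$''---that congruence is the conclusion of a separate, prior step. One first argues (using the large isogeny degree $\ell$ together with an analysis of the reduction type of the underlying elliptic curve and integral models of $X_0(\ell)$) that the mod-$p$ specialization of the noncuspidal point $(E,C)$ must land on a cusp for a suitable small prime $p \neq \ell$. Only after that reduction is in hand does the formal immersion, combined with the injectivity of torsion under reduction $\widetilde{J}(\Q)[\tors] \hookrightarrow \widetilde{J}(\mathbb{F}_p)$, upgrade the congruence $Q \equiv \infty \pmod p$ to the equality $Q = \infty$ in $X_0(\ell)(\Q)$. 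Also, the suggested refinement via symmetric powers of $X_0(\ell)$ is not part of Mazur's argument; that device was introduced later by Kamienny (and generalized by Merel) precisely to attack higher-degree points, i.e.\ the regime of $\operatorname{SI}(d_0)$ for $d_0 > 1$. Mazur works on $X_0(\ell)$ directly throughout. Finally, to justify the sharp constant $\ell_0(1)=37$ you should record that $X_0(37)$ genuinely has a pair of noncuspidal \emph{non-CM} rational points (the two $37$-isogenous curves of conductor $37$, swapped by $w_{37}$); it is this, not any obstruction in the proof technique, that prevents taking $\ell_0(1)$ smaller.
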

\noindent
Theorems \ref{THM3} and \ref{MAZURTHM} imply Theorem \ref{THM0}.

\begin{thm}[Momose \cite{Momose95}]
\label{MOMOSETHM}
Let $F_0$ be a quadratic field that is not imaginary quadratic of class number $1$.  There is a prime
number $\ell_0 = \ell_0(F_0)$ such that for all primes $\ell > \ell_0$, no elliptic curve $E_{/F_0}$ admits
an $F_0$-rational isogeny of degree $\ell$.
\end{thm}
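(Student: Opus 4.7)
The plan is to follow Mazur's formal-immersion method, as adapted to quadratic fields by Kamienny and Momose. An $F_0$-rational cyclic isogeny of prime degree $\ell$ on an elliptic curve $E_{/F_0}$ corresponds to a non-cuspidal $F_0$-rational point $x$ on the modular curve $X_0(\ell)$, so the goal is to show that for $\ell$ sufficiently large (depending on $F_0$), no such $x$ exists.

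The first step is to use the Atkin-Lehner involution $w_\ell$, replacing $x$ by the degree-two effective divisor $x + w_\ell(x)$, which defines an $F_0$-rational point on the symmetric square $X_0(\ell)^{(2)}$. Composing with the Abel-Jacobi map based at the cuspidal divisor $0+\infty$ gives an $F_0$-rational point on the Jacobian $J_0(\ell)$, which I would then project onto the Eisenstein quotient $\widetilde{J}$, where the arithmetic is far more tractable: by Mazur, $\widetilde{J}(\Q)$ is finite and annihilated by the Eisenstein ideal.

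The heart of the argument is then twofold: (a) show that the image in $\widetilde{J}(F_0)$ lies in a finite subgroup whose structure is controlled \emph{independently of $\ell$}, and (b) verify that the composite map $\Phi\colon X_0(\ell)^{(2)} \to \widetilde{J}$ is a formal immersion at the reduction of $0+\infty$ modulo a prime of small residue characteristic (typically $p=3$). Granting (a) and (b), a descent argument forces the reduction of $x+w_\ell(x)$ modulo a suitable prime of $F_0$ to coincide with the reduction of $0+\infty$, and the formal immersion then lifts this equality back to characteristic zero, contradicting the non-cuspidality of $x$ once $\ell$ exceeds an effective bound.

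The exclusion of the imaginary quadratic fields of class number one is essential in step (a): those fields are precisely the ones whose Hilbert class field is themselves, and they produce genuine non-cuspidal $F_0$-points on $X_0(\ell)$ for every prime $\ell$ that splits in $F_0$ via CM theory, so no uniform statement is possible in those cases. When $F_0$ is not of this form, one controls $\widetilde{J}(F_0)$ by combining Mazur's result over $\Q$ with an analysis of the $\mathrm{Gal}(F_0/\Q)$-twist of $\widetilde{J}$ by the quadratic character $\chi_{F_0/\Q}$: the twisted abelian variety is a quotient of $J_0(\ell)$ cut out by a different Hecke ideal, and one must check that \emph{both} it and $\widetilde{J}$ itself have rank zero over $\Q$, uniformly in $\ell$. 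This uniform rank control over $F_0$ is the main obstacle; the formal-immersion computation in (b) is comparatively mechanical, reducing to nonvanishing of a Hecke-theoretic Jacobian matrix at a cusp, which can be handled by computing the action of the Eisenstein-ideal generators on differentials on $X_0(\ell)$.
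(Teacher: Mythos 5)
The theorem is only cited to Momose in the paper, with no proof given; I therefore compare against Momose's actual argument. What you sketch is the Mazur--Kamienny formal-immersion route through the Eisenstein quotient, whereas Momose's proof runs through the \emph{isogeny character}: the character $\lambda\colon \gg_{F_0}\to\mathbb{F}_\ell^\times$ giving the Galois action on the kernel of the cyclic $\ell$-isogeny. The core of his argument is to show, via the Weil bounds on Frobenius eigenvalues at good primes of $F_0$ together with Chebotarev and class field theory, that for $\ell$ large (in terms of $F_0$) the character $\lambda^{12}$ falls into a short list of ``types,'' and then to eliminate each type when $F_0$ is quadratic and not imaginary of class number one. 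The Eisenstein-quotient side of Mazur's $\Q$-argument plays only a supporting role in this number-field generalization, precisely because the needed rank-zero statements over $F_0$ are not available.

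Beyond the difference in approach, your outline has concrete gaps. First, the degree-two divisor you want on $X_0(\ell)^{(2)}$ is $x + x^{\sigma}$, with $\sigma$ the nontrivial element of $\operatorname{Gal}(F_0/\Q)$, not $x + w_\ell(x)$. The point of the symmetric-square device is to produce a $\Q$-rational point so that Mazur's finiteness of $\widetilde{J}(\Q)$ applies; the Atkin--Lehner involution leaves you stuck with an $F_0$-rational divisor and accomplishes nothing toward descent. Second, your plan to control $\widetilde{J}(F_0)$ by proving that both $\widetilde{J}$ and its quadratic twist $\widetilde{J}\otimes\chi_{F_0/\Q}$ have rank zero over $\Q$ uniformly in $\ell$ is exactly the sort of uniform rank statement that is not known and not part of the machine; once you use $x+x^{\sigma}$ and work over $\Q$, the twist never enters. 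Finally, even after these repairs, the $X_0$ case (unlike $X_1$) does not hand you the reduction of $x + x^{\sigma}$ to the cuspidal divisor for free: a large rational point of order $\ell$ forces potentially multiplicative reduction and hence reduction to a cusp of $X_1(\ell)$, but a rational $\ell$-isogeny does not, and this is one of the structural reasons Momose builds his argument around the isogeny character rather than a formal immersion at the cusps.
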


\begin{thm}[Larson-Vaintrob {\cite[Cor. 6.5]{LV14}}]
\label{LARSONVAINTROBTHM}
Let $F_0$ be a number field that does not contain the Hilbert class field of any imaginary quadratic field.  If
the Generalized Riemann Hypothesis (GRH) holds, then the set of prime numbers $\ell$ such that some
elliptic curve $E_{/F_0}$ admits an $F_0$-rational $\ell$-isogeny is finite.
\end{thm}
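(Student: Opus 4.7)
Suppose $E_{/F_0}$ admits an $F_0$-rational cyclic isogeny of prime degree $\ell$, with kernel $C \subset E[\ell]$. The Galois action on $C$ gives an isogeny character $\chi\colon \gg_{F_0} \to \mathbb{F}_\ell^\times$, and since the determinant of the mod-$\ell$ representation is the mod-$\ell$ cyclotomic character $\overline{\chi_\ell}$, the quotient $E[\ell]/C$ is acted on by $\overline{\chi_\ell}/\chi$. My plan is to establish rigidity of $\chi^{12}$ via local analysis at $\ell$ together with class field theory, and then to use effective Chebotarev under GRH to rule out all sufficiently large $\ell$.

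First I would analyze $\chi$ locally.  At primes $\mathfrak{p}$ of $F_0$ above $\ell$, Raynaud's classification of finite flat group schemes (or Fontaine--Laffaille theory) expresses $\chi|_{I_\mathfrak{p}}$ in terms of the fundamental tame characters of inertia, leaving only finitely many options.  At primes $\mathfrak{p} \nmid \ell$, the N\'eron--Ogg--Shafarevich criterion together with the semistable reduction theorem makes $\chi^{12}$ unramified (the $12$th power absorbs the potentially multiplicative contributions).  Hence $\chi^{12}$ factors through the Galois group of an abelian extension of $F_0$ whose conductor is supported on primes above $\ell$ with bounded exponent.

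By class field theory, $\chi^{12}$ corresponds to a Hecke character $\psi$ of $F_0$ of prescribed infinity type.  The key dichotomy, originating with Momose in the quadratic case and generalized by Larson--Vaintrob, is that either $\psi^2$ differs from a power of the cyclotomic norm character by a character of bounded order (the \emph{cyclotomic case}), or $\psi$ descends from a Hecke character of an imaginary quadratic subfield $K \subset F_0$, which forces $F_0$ to contain the Hilbert class field of $K$ (the \emph{CM-type case}).  The standing hypothesis on $F_0$ eliminates the CM-type case.

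In the remaining cyclotomic case, the effective Chebotarev density theorem under GRH produces a prime $\mathfrak{q} \nmid \ell$ of $F_0$ with $N\mathfrak{q}$ bounded by a polynomial in $\log \ell$ and the discriminant of $F_0$, such that $\mathrm{Frob}_\mathfrak{q}$ in the abelian extension cut out by $\chi^{12}$ takes a prescribed value.  Combining the relation $\chi(\mathrm{Frob}_\mathfrak{q}) + N\mathfrak{q}\,\chi(\mathrm{Frob}_\mathfrak{q})^{-1} \equiv a_\mathfrak{q} \pmod{\ell}$, coming from the trace of Frobenius on $E[\ell]$, with the Weil bound $|a_\mathfrak{q}| \le 2(N\mathfrak{q})^{1/2}$, one sees that for $\ell$ beyond an explicit threshold the congruence must be a genuine equality of small integers; this equality, imposed simultaneously at several $\mathfrak{q}$ produced by Chebotarev, fails for any cyclotomic-type $\psi$, giving the desired finiteness.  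The main obstacle is the dichotomy step: the local analysis at $\ell$ must be precise enough to classify all possible tame inertial characters, and class field theory must detect exactly when the resulting Hecke character descends from an imaginary quadratic subfield.  It is in this descent criterion that the hypothesis on $F_0$ is indispensable, and once the dichotomy is in hand the effective Chebotarev step is comparatively standard.
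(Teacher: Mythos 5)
This result is not proved in the paper; it is imported verbatim as \cite[Cor.~6.5]{LV14}, so there is no internal argument to compare against. Your outline is a fair high-level summary of the Momose--Larson--Vaintrob strategy: analyze the isogeny character $\chi$, show $\chi^{12}$ is unramified away from $\ell$ (Raynaud/Fontaine--Laffaille above $\ell$, N\'eron--Ogg--Shafarevich plus semistable reduction elsewhere), realize $\chi^{12}$ as a character of constrained infinity type, establish the cyclotomic-versus-CM dichotomy, eliminate the CM alternative using the hypothesis on $F_0$, and finish the cyclotomic case by effective Chebotarev under GRH together with the Weil bound applied to $\chi(\mathrm{Frob}_\mathfrak{q}) + N\mathfrak{q}\,\chi(\mathrm{Frob}_\mathfrak{q})^{-1} \equiv a_\mathfrak{q} \pmod{\ell}$. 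That said, it is a roadmap rather than a proof: the dichotomy step, which is Larson--Vaintrob's Theorem~6.4 and rests on a delicate analysis of signatures and unit groups of $F_0$, is the technical heart of the matter and is only gestured at, as are the conductor and discriminant estimates needed to run effective Chebotarev with uniformity in $\ell$. Be slightly careful with the CM alternative: what Larson--Vaintrob actually extract is a CM elliptic curve defined over $F_0$ whose twelfth-power isogeny character matches that of $E$, and it is the existence of that CM curve over $F_0$ --- not merely descent of a Hecke character to an imaginary quadratic subfield --- that forces the Hilbert class field of the CM field to embed in $F_0$, which is what the standing hypothesis rules out.
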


\begin{cor}
\label{KEYCOR}
\label{KEYCORB}
Let $F_0$ be a number field that does not contain the Hilbert class field of any imaginary quadratic field.
If $[F_0:\Q] \geq 3$, assume GRH.  Then there is a prime $\ell_0 = \ell_0(F_0)$ such that: for all primes $\ell > \ell_0$,
if $\mathfrak{l}$ is a prime ideal of $\Z_{F_0}$ lying over $\ell$, and $E_{/F_0}$ is an elliptic curve, then for all
$n \in \Z^+$, if $P \in E(\overline{F_0})$ is a point of order $\ell^n$, then there is an integer $1 \leq c \leq
12e(\mathfrak{l}/l)$ and a prime $\mathcal{L}$ of $F_0(P)$ lying over $\mathfrak{l}$ such that $e(\mathcal{L}/\mathfrak{l})$
is divisible by either $\frac{\varphi(\ell^n)}{\gcd(\varphi(\ell^n),c)}$ or by $\ell^n$.
\end{cor}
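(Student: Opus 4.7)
The plan is to reduce the corollary to a direct application of Lozano-Robledo's Theorem~\ref{BIGALVAROTHM} with the parameter $a$ equal to $1$, and to secure that choice of $a$ by invoking the appropriate isogeny-finiteness result for $F_0$, chosen according to $d_0 \coloneqq [F_0:\Q]$.

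If $d_0 = 1$, then $F_0 = \Q$ and Mazur's Theorem~\ref{MAZURTHM} produces $\ell_0 = 37$ beyond which no elliptic curve $E_{/\Q}$ admits a $\Q$-rational $\ell$-isogeny. If $d_0 = 2$, the hypothesis that $F_0$ contains no Hilbert class field of any imaginary quadratic field forces $F_0$ not to be imaginary quadratic of class number~$1$: the Hilbert class field of an imaginary quadratic field $K$ has degree $2\,h(K)$ over $\Q$, which is $\leq 2$ only when $h(K) = 1$, in which case the Hilbert class field is $K$ itself. So Momose's Theorem~\ref{MOMOSETHM} supplies the required $\ell_0 = \ell_0(F_0)$. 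Finally, if $d_0 \geq 3$, GRH is in force, and Larson--Vaintrob's Theorem~\ref{LARSONVAINTROBTHM} yields a finite set of primes admitting an $F_0$-rational $\ell$-isogeny, whose maximum we take as $\ell_0$.

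With $\ell_0 = \ell_0(F_0)$ in hand, I would fix $\ell > \ell_0$ and a non-CM elliptic curve $E_{/F_0}$. By construction $E$ admits no $F_0$-rational $\ell$-isogeny, so the hypothesis of Theorem~\ref{BIGALVAROTHM} is met with $a = 1$. Given $P \in E(\overline{F_0})$ of order $\ell^n$, applying Theorem~\ref{BIGALVAROTHM} with $a = 1$ produces an integer $1 \leq c \leq 12\,e(\mathfrak{l}/\ell)$ and a prime $\mathcal{L}$ of $F_0(P)$ over $\mathfrak{l}$ whose ramification index is divisible either by $\varphi(\ell^n)/\gcd(\varphi(\ell^n), c\,\ell^{a-1}) = \varphi(\ell^n)/\gcd(\varphi(\ell^n), c)$ or by $\ell^{n-a+1} = \ell^n$. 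These are precisely the two divisors appearing in the corollary.

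I do not foresee a genuine obstacle here: the argument is a juxtaposition of two ready-made ingredients, the Lozano-Robledo ramification estimate and the isogeny-finiteness theorems of Mazur, Momose and Larson--Vaintrob. The one spot requiring verification is the translation, when $d_0 = 2$, of the Hilbert-class-field hypothesis into Momose's hypothesis; everything else is a mechanical substitution $a = 1$ in Theorem~\ref{BIGALVAROTHM}.
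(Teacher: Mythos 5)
Your proposal is correct and matches the paper's intended argument, which the authors compress into the single instruction ``Combine Theorems \ref{BIGALVAROTHM}, \ref{MOMOSETHM} and \ref{LARSONVAINTROBTHM}'': in each case you supply an isogeny-finiteness input to license $a=1$ in Lozano-Robledo's Theorem~\ref{BIGALVAROTHM} and then substitute. The only additions you make beyond the paper's terse proof are the observation that the Hilbert-class-field hypothesis rules out imaginary quadratic $F_0$ of class number one (needed to invoke Momose) and an appeal to Mazur for $[F_0:\Q]=1$, both of which are implicitly required and correctly handled; note also that, as you tacitly assume, the corollary should be read with ``non-CM'' inserted before ``elliptic curve,'' since Theorem~\ref{BIGALVAROTHM} requires it and the hypothesis on $F_0$ does not by itself exclude CM curves over $F_0$.
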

\begin{proof}
Combine Theorems \ref{BIGALVAROTHM}, \ref{MOMOSETHM} and \ref{LARSONVAINTROBTHM}.
\end{proof}
\noindent
In $\S$3.2 we use Corollaries \ref{KEYCORA} and \ref{KEYCORB} to get the polynomial bounds on $\exp E(F)[\tors]$ of Theorems \ref{THM1}, \ref{THM2} and \ref{THM3}.  Via (\ref{CRUDEEQ}) this immediately gives a polynomial bound on $\# E(F)[\tors]$.   However in $\S$3.3 we improve this bound using an analysis of
cyclotomic characters, completing the proofs of Theorems \ref{THM0}, \ref{THM1}, \ref{THM2} and \ref{THM3}.


\section{Bounded index results for the $\ell$-adic Galois representation}
\subsection{Statement of the strong Arai theorem}
\noindent
Let $F$ be a number field, and let $E_{/F}$ be a non-CM elliptic curve.
Let
\[ \widehat{\rho}\colon \gg_{F} \ra \Aut T E \cong \GL_2(\Zhat) \]
denote the adelic Galois representation of $E_{/F}$, and for a prime $\ell$, let
\[ \rho_{\ell^{\infty}}\colon \gg_{F} \ra \Aut T_{\ell} E \cong \GL_2(\Z_{\ell}) \]
denote the $\ell$-adic Galois representation of $E_{/F}$.
Then $\det \rho_{\ell^{\infty}} = \chi_{\ell}$ and $F^{\lcyc} = \overline{F}^{\Ker \chi_{\ell}}$, so
\begin{equation}
\label{SPECIALEQ}
 \rho_{\ell^{\infty}}(\gg_{F^{\lcyc}}) = \rho_{\ell^{\infty}}(\gg_F) \cap \SL_2(\Z_{\ell}).
\end{equation}
By a result of Serre \cite{Serre72}, the image
$\widehat{\rho}(\gg_{F})$ is open in $\GL_2(\Zhat)$ -- equivalently, has finite index.  Thus for each
prime $\ell$ the $\ell$-adic image $\rho_{\ell^{\infty}}(\gg_{F})$ has finite index in $\GL_2(\Z_{\ell})$
and $\rho_{\ell^{\infty}}$ is surjective for all but finitely many primes $\ell$.  As mentioned above,
a uniform adelic open image theorem is the \emph{ultima Thule} of this field,
but Arai has proved a uniform $\ell$-adic open image theorem.

\begin{thm}[Arai \cite{Arai08}]
\label{ARAITHM}
Let $F$ be a number field, and let $\ell$ be a prime.  There is $I \in \Z^+$ such that for
every non-CM elliptic curve $E_{/F}$, the image $\rho_{\ell^{\infty}}(\gg_F)$ of the $\ell$-adic Galois representation
has index at most $I$ in $\GL_2(\Z_{\ell})$.
\end{thm}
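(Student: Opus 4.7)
The plan is to reduce the theorem to a finiteness statement about $F$-rational points on modular curves, using $\ell$-adic Lie theory to turn ``small $\ell$-adic image'' into ``small mod $\ell^N$ image.''  Throughout, set $G_E \coloneqq \rho_{\ell^\infty}(\gg_F)$ and $\Gamma(\ell^N) \coloneqq \Ker(\GL_2(\Z_\ell) \to \GL_2(\Z/\ell^N\Z))$.  The pointwise input is Serre's theorem (invoked just above the statement): for every non-CM elliptic curve $E_{/F}$, the image $G_E$ is open in $\GL_2(\Z_\ell)$; equivalently, its $\Q_\ell$-Lie algebra is all of $\mathfrak{gl}_2(\Q_\ell)$.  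Hence $G_E \supseteq \Gamma(\ell^{n_E})$ for some minimal $n_E \geq 0$, and $[\GL_2(\Z_\ell):G_E] \leq |\GL_2(\Z/\ell^{n_E}\Z)|$.  It thus suffices to bound $n_E$ uniformly over non-CM $E_{/F}$.

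I would argue by contradiction using Chabauty compactness.  Assume non-CM elliptic curves $E_i$ over $F$ exist with $[\GL_2(\Z_\ell):G_{E_i}] \to \infty$.  The space of closed subgroups of the compact group $\GL_2(\Z_\ell)$ is compact in the Chabauty topology, so along a subsequence $G_{E_i}$ converges to a closed subgroup $G_\infty$.  Since the normalized Haar measure of $G_{E_i}$ equals $[\GL_2(\Z_\ell):G_{E_i}]^{-1}$ and tends to $0$, the limit $G_\infty$ has measure zero, and hence $\Q_\ell$-analytic dimension strictly less than $4$.  Because $\GL_2(\Z/\ell^N\Z)$ is a finite discrete quotient, Chabauty convergence forces the mod $\ell^N$ image $\pi_N(G_{E_i})$ to eventually equal $H_N \coloneqq \pi_N(G_\infty)$ for each fixed $N$; and $H_N \subsetneq \GL_2(\Z/\ell^N\Z)$ once $N$ is large.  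Consequently $j(E_i)$ is an $F$-rational noncuspidal point of the modular curve $X(H_N)_{/\Q}$.

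For large $N$, the index $[\GL_2(\Z/\ell^N\Z):H_N]$ grows like $\ell^{N(4-\dim G_\infty)}$, so by Riemann--Hurwitz applied to the cover $X(H_N) \to X(1)$ the genus of $X(H_N)$ tends to infinity.  Faltings's theorem then forces $X(H_N)(F)$ to be finite, so infinitely many $E_i$ share a common $j$-invariant $j_0$.  These $E_i$ are $F$-twists of a single curve, so their Galois images differ from one another by an index-$O(1)$ adjustment; hence $[\GL_2(\Z_\ell):G_{E_i}]$ is bounded along this infinite sub-collection, contradicting the hypothesis that it tends to infinity.

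The main obstacle is the passage from the abstract Chabauty limit $G_\infty$ back to concrete modular-curve geometry.  One must verify that $\pi_N(G_\infty)$ corresponds to a genuine moduli interpretation over $\Q$ (a twist-of-modular-curves issue, since $G_\infty$ is defined only up to $\GL_2(\Z_\ell)$-conjugation) and that the associated genera grow without bound as $N \to \infty$---a classical Riemann--Hurwitz computation that nonetheless demands care to track elliptic and cuspidal contributions, especially in small-$\ell$ edge cases.  A secondary subtlety is the twist argument: for $\ell > 2$ quadratic twisting reduces cleanly to composition with a quadratic character, but at $\ell = 2$ mild additional care is needed.
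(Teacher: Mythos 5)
The paper does not actually prove Theorem~\ref{ARAITHM}: it is quoted from Arai's article and then superseded by the strong form, Theorem~\ref{STRONGARAITHM}, whose proof the paper does give. Your proposal is a genuinely different route from that proof (Chabauty compactness plus Riemann--Hurwitz and Faltings, versus Lemma~\ref{CLEVERGROUP} plus Abramovich's gonality bound plus the Clark--Frey theorem on points of bounded degree), and its overall shape---reduce to finiteness of rational points on a modular curve of growing complexity---is sound and close in spirit to both Arai's original argument and the paper's. However, there is a concrete gap in the group-theoretic step.

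The claim that ``the normalized Haar measure of $G_{E_i}$ tends to $0$, so the limit $G_\infty$ has measure zero'' is false for the Chabauty topology in general. Chabauty convergence $G_i \to G_\infty$ gives $\limsup_i \mu(G_i) \le \mu(G_\infty)$ (every open neighbourhood of $G_\infty$ eventually contains $G_i$, combined with outer regularity), which is the \emph{wrong} direction; Haar measure can jump \emph{up} at a Chabauty limit. For a counterexample in a compact group, take $G_n = \tfrac{1}{n}\Z/\Z$ in $\R/\Z$: each $\mu(G_n)=0$, yet $G_n \to \R/\Z$ in Chabauty and $\mu(\R/\Z)=1$. In $\GL_2(\Z_\ell)$ the conclusion you want is in fact true, but it needs a separate argument using the $\ell$-adic analytic structure: every open subgroup has an open Frattini subgroup, so if $\pi_N(G_i) = \pi_N(G_\infty)$ for $N$ past the Frattini level of a putatively open $G_\infty$, then $G_i = G_\infty$ and the indices stabilize rather than escaping to infinity. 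That Frattini input is precisely the content of the paper's Lemma~\ref{CLEVERGROUP}, and your proposal needs it (or an equivalent substitute) to rule out the case that the mod-$\ell^N$ images $\pi_N(G_i)$ stay full or stabilize at a fixed proper open image while $[\GL_2(\Z_\ell):G_i]\to\infty$. Once that is supplied, the rest of your outline (the local constancy of $\pi_N$ under Chabauty convergence, the modular-curve rationality and twist bookkeeping you already flag, Riemann--Hurwitz or, more cheaply, Abramovich's gonality bound as in the paper, and the descent to a single $j$-invariant) goes through.
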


\begin{remark}
Arai states Theorem \ref{ARAITHM} slightly differently.  For $n \in \Z^+$, put
\[ \mathcal{U}^{(n)} \coloneqq \operatorname{Ker} (\GL_2(\Z_{\ell}) \ra \GL_2(\Z/\ell^n \Z)). \]
Then each $\mathcal{U}^{(n)}$ is an open subgroup of $\GL_2(\Z_{\ell})$, and each open subgroup $\Gamma$ of
$\GL_2(\Z_{\ell})$ contains $\mathcal{U}^{(n)}$ for all sufficiently large $n$.  The least such $n$ is called the
\textbf{level} of $\Gamma$.  Then Arai proves: for a number field $F$ and a prime $\ell$, there is $n = n(F,\ell)$
such that for every non-CM elliptic curve $E_{/F}$, the level of $\rho_{\ell^{\infty}}(\gg_F)$ is at most $n$.  \\ \indent
The statement in terms of the level immediately implies the statement in terms of the index.  The reverse
implication holds because (cf. Lemma \ref{CLEVERGROUP}a)) the intersection of all open subgroups of $\GL_2(\Z_{\ell})$
of index at most $I$ is an open subgroup of $\GL_2(\Z_{\ell})$.
\end{remark}
\noindent
The main goal of this section is to prove the following \emph{strong} form of Arai's theorem.

\begin{thm}
\label{STRONGARAITHM}
Fix a prime number $\ell$ and a positive integer $d$.
\begin{itemize}
\item[a)] As we range over all non-CM elliptic curves $E_{/F}$ defined over number fields of degree $d$, there is an absolute
bound on the index of the image of the $\ell$-adic Galois representation $\rho_{\ell^{\infty}}(\gg_F)$ in $\GL_2(\Z_{\ell})$.
\item[b)] Moreover, for all but finitely many $j$-invariants we have
\[ [\GL_2(\Z_{\ell}):\rho_{\ell^{\infty}}(\gg_F)] \leq \frac{3200 d^2}{7}. \]
\end{itemize}
\end{thm}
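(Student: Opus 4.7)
\emph{Plan.} The strategy is to bootstrap Arai's per-field theorem (Theorem \ref{ARAITHM}) to a uniform bound over all degree-$d$ fields by replacing dependence on $F$ with dependence on $j(E)$, and then exploiting the finiteness properties of $\GL_2(\Z_\ell)$ together with the geometry of modular curves of growing index. The key structural input is Lemma \ref{CLEVERGROUP}(a), an elementary consequence of $\ell$-adic analyticity: since every open subgroup of $\GL_2(\Z_\ell)$ contains some principal congruence subgroup $\mathcal{U}^{(n)}$ and $\GL_2(\Z/\ell^n\Z)$ is finite, there are only finitely many open subgroups of any given index. This finiteness fails for $\GL_2(\Zhat)$, which is exactly why the $\ell$-adic problem is tractable while the adelic one remains open.

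For part (a) I would argue by contradiction. Suppose no uniform bound exists; then there is a sequence of non-CM elliptic curves $E_n/F_n$ with $[F_n:\Q]=d$ whose $\ell$-adic Galois images have index tending to $\infty$ in $\GL_2(\Z_\ell)$. Since the image depends on $j(E_n)$ only up to a controlled twisting factor, I may replace $F_n$ by $\Q(j(E_n))$, of degree at most $d$. By the finiteness of open subgroups of bounded index, after passing to a subsequence I may assume the images lie in a nested sequence of proper open subgroups $\Gamma_1 \supsetneq \Gamma_2 \supsetneq \cdots$ of arbitrarily large index. Containment of the image in $\Gamma_m$ is an algebro-geometric condition: up to twist, $j(E_n)$ corresponds to a non-cuspidal non-CM $F_n$-rational point on a modular curve $Y_{\Gamma_m}$, an \'etale cover of $Y(1)$ of degree $[\GL_2(\Z_\ell):\Gamma_m]$. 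For $m$ large enough this degree, and hence the genus, is so large that by Faltings's theorem refined to bounded-degree points (Frey--Abramovich--Harris--Silverman), $Y_{\Gamma_m}$ has only finitely many points of degree $\leq d$. This forces finitely many $j$-invariants, and Arai's theorem applied over the finitely many resulting fields $\Q(j)$ yields the contradiction.

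For part (b), the constant $\frac{3200 d^2}{7}$ should emerge from a direct analysis of the mod-$\ell$ Galois representation once the finitely many exceptional $j$-invariants from part (a) are set aside. Outside that exceptional locus I expect the image in $\GL_2(\mathbb{F}_\ell)$ to be as large as the determinant constraint allows; Serre's lifting lemma (valid for $\ell \geq 5$) then places $\SL_2(\Z_\ell)$ inside $\rho_{\ell^\infty}(\gg_F)$, so the index is pinned down by the cyclotomic defect $[\Z_\ell^\times : \chi_\ell(\gg_F)] \leq [F \cap \Q^{\cyc}:\Q] \leq d$, multiplied by a further loss of order $d$ from the twist. The denominator $7$ presumably reflects the threshold $\ell \geq 7$ beyond which Dickson's classification of subgroups of $\GL_2(\mathbb{F}_\ell)$ takes a uniform shape; the remaining small primes $\ell \in \{2,3,5\}$ contribute only to the finite exceptional set.

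The hard part will be the Faltings step in (a): verifying that for every sufficiently high-index open $\Gamma \leq \GL_2(\Z_\ell)$ the modular curve $Y_\Gamma$ admits no degree-$\leq d$ map to $\PP^1$ and no degree-$\leq d$ map to a positive-rank elliptic quotient. Once this geometric input is in place, the rest of the argument proceeds by routine bookkeeping that combines Faltings, Arai, and the twist adjustments, delivering both (a) and (b) with explicit constants.
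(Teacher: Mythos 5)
Your strategy for part (a) — exploit Lemma~\ref{CLEVERGROUP} to reduce to finitely many sufficiently-high-index open subgroups $\Gamma$, realize containment of the image in $\Gamma$ as a degree-$\le d$ point on a modular curve $X_\Gamma$, and invoke a Faltings/Frey/Abramovich finiteness result — is the right one and is essentially what the paper does. Two cautions there: first, your ``nested sequence $\Gamma_1\supsetneq\Gamma_2\supsetneq\cdots$'' is not quite what comes out of the finiteness of subgroups of bounded index; what you actually want is Lemma~\ref{CLEVERGROUP}(d), namely that the family of open subgroups of index $> I$ has only \emph{finitely many maximal elements}, so it suffices to treat finitely many $\Gamma$'s, not a single chain. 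Second, the Frey-type input does not require you to rule out low-degree maps to positive-rank elliptic quotients; \cite[Thm.~5]{Clark09} says directly that if $X_\Gamma$ has infinitely many closed points of degree dividing $d$, then its gonality is $\le 2d$ --- no elliptic-quotient condition is needed. You seem to be importing machinery from the Abramovich--Harris program that is not required here.

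The real gap is in part (b). You propose that the constant $\frac{3200 d^2}{7}$ comes from a \emph{separate} direct analysis of the mod-$\ell$ image via Serre's lifting lemma and Dickson's classification, with the $7$ in the denominator ``reflecting the threshold $\ell\ge 7$.'' That is not where the constant comes from, and an argument along those lines would not produce it. The bound in (b) falls out of the \emph{same} geometric computation used for (a): Abramovich's theorem \cite[Thm.~0.1]{Abramovich96} gives the linear gonality lower bound $\overline{\mathfrak{d}}_{\Gamma} \ge \frac{7}{800}\, D_\Gamma$ where $D_\Gamma = [\PSL_2(\Z):P\overline{\Gamma}]$, Lemma~\ref{NEWSLEMMA} (via the determinant/cyclotomic index $\mid d$) gives $D_\Gamma \ge [\GL_2(\Z_\ell):\Gamma]/d$, and the Frey/Clark step gives $\mathfrak{d}_\Gamma \le 2d$ when there are infinitely many degree-$\le d$ points. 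Chaining these yields $[\GL_2(\Z_\ell):\Gamma] \le \frac{1600}{7}d^2$, and a final factor of $2$ enters because one must first pass to a quadratic twist to arrange $-1$ in the image. So the $7$ is the numerator of Abramovich's constant $\frac{7}{800}$, not the prime $7$. Relatedly, your logical order is inverted relative to the paper's: one establishes the bound $\frac{3200d^2}{7}$ for all but finitely many $j$-invariants \emph{first}, and then part (a) follows by invoking Serre's open image theorem for the finitely many exceptional $j$'s and taking the maximum — not the other way around.
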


\begin{remark}\mbox{ }
\begin{itemize}
\item[a)] Theorem \ref{STRONGARAITHM}a) is a quick consequence of results of Cadoret and Tamagawa \cite{CT12,CT13}. Namely, we apply \cite[Thm. 1.1]{CT13} with $k = \Q$ to the family of elliptic curves $E \ra X \coloneqq \PP^1 \setminus \{0,1728,\infty\}$ given by  \[ E_j: y^2 + xy - x^3 + \frac{36}{j-1728}x + \frac{1}{j-1728} = 0 \]
of \cite[\S5.1.3]{CT12} -- this family is geometrically Lie perfect by \cite[Thm. 5.1]{CT12}.  The conclusion is that, for each $d \in \Z^+$, for each fixed prime $\ell$ and positive integer $d$, there is $B(\ell,d) \in \Z^+$ such that for all but finitely many closed points
$j \in X$ of degree at most $d$, the index of the image of $\ell$-adic Galois representation on $(E_j)_{/\Q(j)}$ in $\GL_2(\Z_{\ell})$
is at most $B(\ell,d)$.  By Serre's open image theorem the result extends to all non-CM $j$-invariants of degree at most $d$ with
some absolute bound, say $\widetilde{B(\ell,d)}$.   For
any non-CM elliptic curve $E$ defined over a degree $d$ number field $F$, there is an elliptic curve $E_j$ in the above family and a
number field $K \supset F(j(E))$ with $[K:\Q] \leq 2d$ such that $(E_j)_{/K} \cong E_{/K}$.  The index of the image of the $\ell$-adic
Galois representation of $E_{/F}$ in $\GL_2(\Z_{\ell})$ is no larger than the index of the $\ell$-adic Galois representation of
$E_{/K}$ in $\GL_2(\Z_{\ell})$ and thus no larger than $2d \cdot \widetilde{B(\ell,d)}$. \item[b)] Rouse outlined a proof of Theorem \ref{STRONGARAITHM}a) on MathOverflow \cite{Rouse}.  His methods would yield a version
of Theorem \ref{STRONGARAITHM}b).
\item[c)] Theorem \ref{STRONGARAITHM}a) is sufficient for our applications.  Nevertheless we want to include a proof of
Theorem \ref{STRONGARAITHM}b).  First, it seems interesting that in a natural case\footnote{In \cite{CT12}, the authors
identify Arai's work as a motivation for their own.} of \cite[Thm. 1.1]{CT13} we can get a bound that is -- after omitting a finite
set of $j$-invariants that depends on $\ell$ and $d$ -- explicit
and independent of $\ell$.  Second, the proof of \cite[Thm. 1.1]{CT13} takes about $25$ pages, whereas the outline
of \cite{Rouse} is 13 lines.  Our argument is about 2.5 pages; readers may appreciate having a
proof of this intermediate length.  Finally, in \cite[Thm. 1.2]{CT13}, Cadoret-Tamagawa state a result of Frey \cite{Frey94} but
omit Frey's assumption that $X(k) \neq \varnothing$.  This is easily remedied by using a variant on Frey's result
from \cite{Clark09}, and our argument shows how to do this.
\end{itemize}
\end{remark}

\subsection{Group theoretic preliminaries}

\begin{lemma}
\label{CLEVERGROUP}
Let $\ell$ be a prime, and let $G$ be an infinite $\ell$-adic analytic group.
\begin{enumerate}
	\item[a)] $G$ is topologically finitely generated.  A subgroup of $G$ is open iff it has finite index.  For all $I \in \Z^+$, there are only finitely many index $I$ subgroups of $G$.
	\item[b)] Every open subgroup of $G$ has at least one and finitely many maximal proper open subgroups.
	\item[c)] Let $\mathcal{F}$ be a set of open subgroups of $G$ such that $\mathcal{F}$ contains all but finitely many open subgroups of $G$.  Then every element of $\mathcal{F}$ is contained in a
maximal element, and $\mathcal{F}$ has finitely many maximal elements.
\item[d)] For $I \in \Z^+$, the family $\mathcal{F}_I$ of open subgroups $H \subset G$ with $[G:H] > I$ satisfies
the hypotheses of part c) and thus has at least one and finitely many maximal elements.

\end{enumerate}
\end{lemma}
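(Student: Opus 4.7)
The plan is to establish parts (a)--(d) in sequence, with each later part bootstrapping from the earlier ones and from standard structure theory of compact $\ell$-adic analytic groups.

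For part (a), I would first invoke Lazard's theorem to produce an open normal uniform pro-$\ell$ subgroup $U \triangleleft G$. Since $U$ is uniform of finite dimension, it is topologically finitely generated, and $G/U$ is finite, so $G$ itself is topologically finitely generated, say by $d'$ elements. The equivalence ``open = finite index'' then follows from the Nikolov--Segal theorem on topologically finitely generated profinite groups. To count subgroups of index $I$, each such subgroup is the point stabilizer of a transitive action of $G$ on a set of size $I$, hence is determined by a continuous homomorphism $G \to S_I$ (whose kernel is automatically open) together with a choice of basepoint; this yields at most $|S_I|^{d'} \cdot I$ subgroups of index $I$.

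For part (b), the key preliminary observation is that any open subgroup $H \leq G$ is itself an infinite compact $\ell$-adic analytic group (open subgroups are closed, hence compact, and inherit the analytic structure), so part (a) applies directly to $H$. Existence of a maximal proper open subgroup follows from a finite-chain argument: any strictly ascending chain of proper open subgroups has strictly decreasing indices, hence terminates. For finiteness, the plan is to produce an open normal subgroup $N \triangleleft H$ contained in every maximal proper open subgroup of $H$; then the maximal proper open subgroups of $H$ correspond bijectively to the maximal subgroups of the finite group $H/N$. The natural candidate is the Frattini subgroup $\Phi(H)$, and I expect verifying its openness in $H$ to be the main technical obstacle. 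I would use the descending Lazard filtration $V \supset V^\ell \supset V^{\ell^2} \supset \cdots$ of an open normal uniform pro-$\ell$ subgroup $V \triangleleft H$ (each term open in $H$ and normal in $H$, with intersection trivial): for any maximal proper open $M \leq H$, the closedness of $M$ together with the triviality of $\bigcap_n V^{\ell^n}$ forces $V^{\ell^n} \subset M$ for some $n$ (otherwise $MV^{\ell^n} = H$ for every $n$ would make $M$ dense in $H$ and hence equal to $H$), and uniformly bounding such $n$ across all maximal $M$ reduces the problem to counting maximal subgroups of the finite quotient $H/V^{\ell^n}$.

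For parts (c) and (d), let $\mathcal{E}$ denote the finite set of open subgroups of $G$ not in $\mathcal{F}$. Given $H \in \mathcal{F}$, part (a) shows that $H$ has only finitely many open overgroups in $G$ (each of index dividing $[G:H]$), so the overgroups of $H$ lying in $\mathcal{F}$ admit a maximal element, which is automatically maximal in all of $\mathcal{F}$ since any strict $\mathcal{F}$-extension would also be an overgroup of $H$. For finiteness of the maximal elements of $\mathcal{F}$, I would use the following dichotomy: a maximal $K \in \mathcal{F}$ is either a maximal open subgroup of $G$ itself --- finitely many by part (b) applied with $H = G$ --- or else has some open proper overgroup $K' \subsetneq G$, and choosing $K'$ minimal among such forces $K' \in \mathcal{E}$ and makes $K$ a maximal open subgroup of $K'$, contributing only finitely many candidates by applying part (b) to each of the finitely many $K' \in \mathcal{E}$. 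Part (d) is then immediate: by part (a) the open subgroups of $G$ of index at most $I$ form a finite set, so its complement $\mathcal{F}_I$ in the family of all open subgroups satisfies the hypothesis of part (c).
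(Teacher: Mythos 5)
Your overall decomposition tracks the paper's closely: part a) rests on Lazard plus topological finite generation (the paper cites Fried--Jarden for the count of index-$I$ subgroups; your $S_I$-action argument is a standard equivalent), and parts c) and d) use essentially the same dichotomy between overgroups inside and outside $\mathcal{F}$ (the paper packages this as a set $\mathcal{S}$ of maximal proper open subgroups of the finitely many $P \notin \mathcal{F}$; your two-case split amounts to the same thing).

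The one genuine gap is in part b). The paper simply invokes Serre's \emph{Lectures on the Mordell-Weil theorem} for the fact that the Frattini subgroup $\Phi(U)$ of an open subgroup $U$ of an $\ell$-adic analytic group is open, and then finiteness of maximal proper open subgroups is read off from the finite Frattini quotient. You instead try to derive this openness from the Lazard filtration, but the proposed reduction is circular: you correctly observe that each maximal proper open $M \le H$ contains some $V^{\ell^{n(M)}}$ (since these form a neighborhood base of $1$ and $M$ is open), and then say you would ``uniformly bound such $n$ across all maximal $M$'' --- but that uniform bound \emph{is} the assertion that $\Phi(H)$ is open, so you have restated the goal rather than supplied a proof. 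The easy ``density'' reasoning only gives an $n$ depending on $M$; nothing in that sketch controls $n$ across the a priori infinite family of maximal $M$'s. A clean way to close the gap, in the spirit of what you set up: take $V \triangleleft H$ a normal open uniform pro-$\ell$ subgroup. For a maximal proper open $M \le H$, either $V \subseteq M$ (so $[H:M]$ divides $[H:V]$), or $MV = H$, whence $V \cap M$ is a maximal open subgroup of the pro-$\ell$ group $V$ and so has index $\ell$ and contains $\Phi(V) = V^\ell$; in this case $[H:M]=\ell$. Either way $[H:M] \le \max([H:V],\ell)$, so the maximal proper open subgroups of $H$ have bounded index and finiteness follows from part a), with the uniform bound $n=1$ coming out for free. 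Without some such step, your part b) does not yet stand on its own.
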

\begin{proof}
a) Lazard has shown that
 an $\ell$-adic analytic group
is topologically finitely generated and that a subgroup of $G$ is open iff it has finite index \cite{Lazard65}.  Moreover,
every topologically finitely generated profinite
group has only finitely many open subgroups of any given finite index \cite[Lemma 16.10.2]{FJ}.\footnote{Each finite index subgroup of a topologically finitely generated profinite group is open \cite{NS07}.} \medskip

\noindent b) Every nontrivial profinite group has a proper open subgroup, and any such group is contained in a maximal proper subgroup.  And every open subgroup $U$ of $G$ is again an $\ell$-adic analytic group, so by \cite[pp. 148--149]{S-LMW} the Frattini subgroup $\Phi(U)$ is open and thus $U$ has only finitely many maximal proper open subgroups.\medskip

\noindent c) We may assume $G \notin \mathcal{F}$.  Since $G$ is infinite and profinite, $\mathcal{F} \neq \varnothing$.  As for any group, the set of finite index subgroups of $G$, partially ordered under inclusion, satisfies the ascending chain
condition, hence so does $\mathcal{F}$ and every element of $\mathcal{F}$ is contained in a maximal element.  To show that there are only finitely many maximal elements of $\mathcal{F}$
it suffices to find a finite subset $\mathcal{S} \subset \mathcal{F}$ such that for every $K \in \mathcal{F}$
there is $H \in \mathcal{S}$ such that $K \subseteq H$, for then the maximal elements of $\mathcal{F}$ are the maximal
elements of $\mathcal{S}$.  \\ \indent
Let $\mathcal{S}$ be the set of elements $H \in \mathcal{F}$
such that $H$ is a maximal proper open subgroup of an open subgroup $P$ of $G$ such that $P \notin \mathcal{F}$.  By assumption the set of such subgroups $P$ is finite, so $\mathcal{S}$ is finite by part b).  Because $G \notin \mathcal{G}$, for $K \in \mathcal{F}$, the set
of subgroups $Q$ with $K \subsetneq Q \subseteq G$ and $Q \notin \mathcal{F}$ is finite and nonempty; choose
a minimal element $\underline{Q}$.  The set of subgroups $H$ with $K \subseteq H \subsetneq \underline{Q}$
is finite and nonempty; choose a maximal element $\overline{H}$.  Then $K \subset \overline{H}$ and $\overline{H} \in \mathcal{S}$.
\medskip
\noindent d) This is immediate from part a).
\end{proof}

\begin{remark}
It follows from \cite[pp. 148]{S-LMW} that the necessary and sufficient condition on a profinite group $G$ for all of the conclusions
of Lemma \ref{CLEVERGROUP} to hold for $G$ is that the Frattini subgroup $\Phi(G)$ of $G$ be open.
\end{remark}


\begin{lemma}
\label{NEWSLEMMA}
Let $F$ be a degree $d$ number field, and let $E_{/F}$ be an elliptic curve.  Let $\ell$ be a prime number.  Let
$G = \rho_{\ell^{\infty}}(\gg_F)$ be the image of the $\ell$-adic Galois representation on $E$ and let
$H = G \cap \SL_2(\Z_{\ell})$.  Then we have
\begin{equation}
\label{NEWSEQ}
[\GL_2(\Z_{\ell}):G] \leq d[\SL_2(\Z_{\ell}):H].
\end{equation}
\end{lemma}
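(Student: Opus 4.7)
The plan is to factor the index $[\GL_2(\Z_\ell):G]$ according to the short exact sequence
\[ 1 \to \SL_2(\Z_\ell) \to \GL_2(\Z_\ell) \xrightarrow{\det} \Z_\ell^\times \to 1 \]
and then bound the ``determinantal'' factor by $d$ using the cyclotomic character.

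First, I would write
\[ [\GL_2(\Z_\ell):G] = [\GL_2(\Z_\ell): G \cdot \SL_2(\Z_\ell)] \cdot [G \cdot \SL_2(\Z_\ell):G]. \]
The second factor equals $[\SL_2(\Z_\ell): G \cap \SL_2(\Z_\ell)] = [\SL_2(\Z_\ell):H]$ by the second isomorphism theorem (applied to $G$ and the normal subgroup $\SL_2(\Z_\ell)$ of $\GL_2(\Z_\ell)$). The first factor is identified via the determinant: since $\det$ induces $\GL_2(\Z_\ell)/\SL_2(\Z_\ell) \cong \Z_\ell^\times$ and $G \cdot \SL_2(\Z_\ell)$ is the preimage of $\det(G)$, we get $[\GL_2(\Z_\ell): G \cdot \SL_2(\Z_\ell)] = [\Z_\ell^\times : \det(G)]$. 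Thus
\[ [\GL_2(\Z_\ell):G] = [\Z_\ell^\times : \det(G)] \cdot [\SL_2(\Z_\ell):H]. \]

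Next, I would identify $\det(G)$ with the image of the cyclotomic character. Because $\det \circ \rho_{\ell^\infty} = \chi_\ell$, we have $\det(G) = \chi_\ell(\gg_F)$. Since $\Q(\mu_{\ell^n})/\Q$ has Galois group $(\Z/\ell^n\Z)^\times$, the character $\chi_\ell\colon \gg_\Q \to \Z_\ell^\times$ is surjective, so $\chi_\ell(\gg_\Q) = \Z_\ell^\times$. The subgroup $\chi_\ell^{-1}(\chi_\ell(\gg_F)) \subseteq \gg_\Q$ contains $\gg_F$, hence has index at most $[\gg_\Q:\gg_F] = d$ in $\gg_\Q$. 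Quotienting by $\ker \chi_\ell$ gives
\[ [\Z_\ell^\times : \chi_\ell(\gg_F)] = [\gg_\Q: \chi_\ell^{-1}(\chi_\ell(\gg_F))] \leq d. \]
Combining these two displays yields $[\GL_2(\Z_\ell):G] \leq d\,[\SL_2(\Z_\ell):H]$, as desired.

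There is essentially no obstacle here; the argument is purely formal group theory once one notices the factorization through the determinant. The only mild subtlety is that $\gg_F$ need not be normal in $\gg_\Q$, so one should not speak of the group $\gg_\Q/\gg_F$; however, only the index $[\gg_\Q:\gg_F] = d$ as a set is used, so this causes no trouble.
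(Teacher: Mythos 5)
Your proof is correct and follows essentially the same decomposition as the paper: separate the index into an $\SL_2(\Z_\ell)$-part and a determinantal part, then bound the latter by $d$ via the surjectivity of $\chi_\ell$ on $\gg_\Q$. The one cosmetic difference is in how the index decomposition is justified: you use the tower $\GL_2(\Z_\ell) \supseteq G\cdot\SL_2(\Z_\ell) \supseteq G$ together with the second isomorphism theorem and the correspondence theorem, which gives the \emph{equality} $[\GL_2(\Z_\ell):G] = [\Z_\ell^\times:\det G]\cdot[\SL_2(\Z_\ell):H]$, whereas the paper proves an abstract inequality $[G:H]\le[N:H\cap N][G/N:q(H)]$ by a short pigeonhole argument and then specializes. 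Your version is slightly sharper and uses more standard machinery, but the content and structure are the same.
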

\begin{proof} Step 1: Let $G$ be a group, let $N$ be a normal subgroup of $G$, and let $q\colon G \ra G/N$ be the quotient map.  Then we have
\begin{equation}
\label{LITTLEGROUP}
 [G:H] \leq [N:H \cap N][G/N:q(H)].
\end{equation}
Indeed, let $X \subset G$ have cardinality larger than $[N:H \cap N][G/N:q(H)]$.  By the Pigeonhole
Principle, there is $Y \subset X$ of cardinality larger than ${[N:H \cap N]} = {[HN:H]}$ such that
for all $y_1,y_2 \in Y$, we have $q(y_2 y_1^{-1}) \in q(H)$, so $y_2 y_1^{-1} \in q^{-1}(q(H)) = HN$.
So there are $y_1 \neq y_2$ such that $y_1 H = y_2 H$.
\medskip

\noindent Step 2: The $\ell$-adic cyclotomic character  $\chi_{\ell}\colon \gg_{\Q} \rightarrow \widehat{\Z}_{\ell}^{\times}$
is surjective, so for any degree $d$ number field $F$ and elliptic curve $E_{/F}$, we have $[\Z_\ell^{\times}:\det \rho_{\ell^{\infty}}(\gg_F)] \mid d$.
 Applying (\ref{LITTLEGROUP})
with $G = \GL_2(\Z_{\ell})$, $N = \SL_2(\Z_{\ell})$, $H = G$ and using (\ref{SPECIALEQ}), we get (\ref{NEWSEQ}).
\end{proof}

\noindent

\subsection{Proof of Theorem \ref{STRONGARAITHM}}
\noindent
Step 1: Put $I \coloneqq \left\lfloor 1600d^2/7 \right\rfloor$.  Let $F$ be a number field, let $E_{/F}$ be a non-CM elliptic curve, and let $G = \rho_{\ell^{\infty}}(\gg_F)$ be the image of the $\ell$-adic Galois representation on $E_{/F}$.  Some quadratic twist $E'$ of $E$ has $-1 \in G$.  Put $G' \coloneqq
\rho_{\ell^{\infty}}(\gg_F)$.  Then $[\GL_2(\Z_{\ell}):G] \leq 2[\GL_2(\Z_{\ell}):G']$, so it suffices to work with $E'$.  We may also assume that $[\GL_2(\Z_{\ell}):G'] > I$.  Applying Lemma \ref{CLEVERGROUP} with $G = \GL_2(\Z_{\ell})$ we get that $G'$ is contained in one of finitely many open subgroups $\Gamma \subset \GL_2(\Z_{\ell})$ with $[\GL_2(\Z_{\ell}):\Gamma] > I$.  So it suffices to bound $[\GL_2(\Z_{\ell}):G']$ while assuming that $G' \subset \Gamma$
for a \emph{fixed} $\Gamma$. \medskip

\noindent Step 2: Let $\Gamma \subset \GL_2(\Z_{\ell})$ be an open subgroup with
\begin{equation}
\label{KEYINDEXEQ}
[\GL_2(\Z_{\ell}):\Gamma] > I.
\end{equation}
Put
\[ S\Gamma \coloneqq \Gamma \cap \SL_2(\Z_{\ell}). \]
Then $\overline{\Gamma} \coloneqq \Gamma \cap \SL_2(\Z)$ is a congruence subgroup of $\SL_2(\Z)$.  Let
$P \overline{\Gamma}$ be the image of $\overline{\Gamma}$ in $\PSL_2(\Z)$, and put $D_{\Gamma} \coloneqq [\PSL_2(\Z):P \overline{\Gamma}]$.  Since $-1 \in G' \subset \Gamma$, we have $[\PSL_2(\Z):P \overline{\Gamma}] = [\SL_2(\Z):S \Gamma]$. Using (\ref{NEWSEQ}) we get
\[ D_{\Gamma} \geq \frac{[\GL_2(\Z_{\ell}):\Gamma]}{d}. \]

\noindent Step 3:  Let $\Q(\Gamma)$ be the finite subextension of $\Q^{\lcyc}/\Q$ corresponding
to the open subgroup $\det \Gamma \subset \Z_{\ell}^{\times}$.  Then there is a modular curve $X_{\Gamma}$ that is defined and geometrically integral over $\Q(\Gamma)$, and such that the base extension of $X_{\Gamma}$ to $\C$ is the compact Riemann surface $\overline{\Gamma} \backslash \overline{\mathcal{H}}$.  If for an elliptic curve $E_{/F}$ we have $G = \rho_{\ell^{\infty}}(\gg_{F}) \subset \langle \Gamma, -1 \rangle$, then $\Q(\Gamma) \subset F$, and there is an induced point on $X_{\Gamma}(F)$ and thus a closed point on $X_{\Gamma}$ of
degree dividing $d$.  Let $\mathfrak{d}_{\Gamma}$ be the gonality of the curve $X_{\Gamma}$ --
the least positive degree of a map $X_{\Gamma} \ra \PP^1$ defined over $\Q(\Gamma)$ -- and let $\overline{\mathfrak{d}}_{\Gamma}$ be the gonality of $(X_{\Gamma})_{/\C}$, so
\[ \overline{\mathfrak{d}}_{\Gamma} \leq \mathfrak{d}_{\Gamma}. \]
We claim that $X_{\Gamma}$ has only finitely many closed points
of degree dividing $d$.  Indeed, if not then by \cite[Thm. 5]{Clark09} we have
\[ \mathfrak{d}_{\Gamma} \leq 2d. \]
On the other hand, by a theorem of Abramovich \cite[Thm. 0.1]{Abramovich96}, we have
\[ \overline{\mathfrak{d}}_{\Gamma} \geq \frac{7}{800} D_{\Gamma}. \]
Putting these results together, we get the upper bound
\[ [\GL_2(\Z_{\ell}):\Gamma] \leq \frac{1600}{7} d^2, \]
and thus
\[ [\GL_2(\Z_{\ell}):\Gamma] \leq I, \]
contradicting (\ref{KEYINDEXEQ}).  Thus the set of $j$-invariants of non-CM elliptic curves over degree
$d$ number fields such that the index of the image of the $\ell$-adic Galois representation exceeds
$\frac{3200}{7} d^2$ is finite.  (Here we have multiplied by $2$ to get back from $G'$ to $G$.)  Let the exceptional $j$-invariants be $j_1,\ldots,j_M \in \overline{\Q}$.
For $1 \leq i \leq M$, choose an elliptic curve $E_{/\Q(j_i)}$ with $j$-invariant $j_i$, and let $I_i$ be the index
of the image of the $\ell$-adic Galois representation (by Serre's open image theorem, each $I_i$ is finite).  Now let $E_{/F}$ be a non-CM elliptic curve defined over a degree $d$ number field $F$
such that the index of the image of the $\ell$-adic Galois representation exceeds $\frac{3200}{7} d^2$.  Then
$j(E) = j(E_i)$ for some $i$.  There is a number field $K \supset F$ such that $E_{/K} \cong (E_i)_{/K}$
and $[K:\Q] \leq 2 d^2$, and thus the index of the image of the $\ell$-adic Galois representation of $E_{/F}$
is at most $2d^2 I_i$.  So for any non-CM elliptic curve over any degree $d$ number field for which the
image of the $\ell$-adic Galois representation is contained in $\Gamma$, the index of
the image of the $\ell$-adic Galois representation is at most
\[ \max \left(  \frac{3200}{7} d^2 , \max_{1 \leq i \leq M} 2d^2 I_i \right). \qedhere\]

\subsection{A consequence}
\begin{thm}
\label{ARAICOR}
Let $d _0 \in \Z^+$, and let $\mathfrak{s} \subset \mathcal{P}$ be finite. There is $C = C(d_0,\mathfrak{s}) \in \Z^+$ such that if $E_{/F}$ is a non-CM elliptic curve with $[\Q(j(E)):\Q] = d_0$, then
\[ \exp E(F)[\mathfrak{s}^{\infty}] \leq C [F:\Q(j(E))]^{\frac{1}{2}}. \]
\end{thm}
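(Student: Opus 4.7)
Set $F_0 \coloneqq \Q(j(E))$ and $d \coloneqq [F:F_0]$. The plan is first to reduce to the case that $E$ is itself defined over $F_0$: since $j(E) \in F_0$, the curve $E$ is a twist of some $E_0/F_0$ and becomes isomorphic to $E_0$ over a field extension of $F$ of degree at most $6$. This multiplies $d$ only by a bounded constant that can be absorbed in the final $C$, so from now on I will assume $E$ is defined over $F_0$.

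Next let $m \coloneqq \exp E(F)[\mathfrak{s}^\infty]$ and pick $P \in E(F)$ of exact order $m$, which exists because $E(F)[\mathfrak{s}^\infty]$ is a finite abelian group. Consider $\rho_m\colon \gg_{F_0} \ra \GL_2(\Z/m\Z)$ with image $H$, and choose a $\Z/m\Z$-basis of $E[m]$ starting with $P$, identifying $P$ with $(1,0)$. Then the stabilizer of $P$ in $\GL_2(\Z/m\Z)$ has order $m\varphi(m)$, so the full $\GL_2(\Z/m\Z)$-orbit of $P$ has size $m^2\prod_{\ell\mid m}(1-\ell^{-2})$. Since $F_0(P) \subseteq F$, the $H$-orbit of $P$ (which has size $[F_0(P):F_0]$) is at most $d$.

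The main step is to prove $[\GL_2(\Z/m\Z):H] \leq J(d_0,\mathfrak{s})$ uniformly in $E$ and $m$. Theorem \ref{STRONGARAITHM} applied to each $\ell \in \mathfrak{s}$ gives $[\GL_2(\Z_\ell):\rho_{\ell^\infty}(\gg_{F_0})] \leq I_\ell(d_0)$. Reducing modulo $\ell^{n_\ell}$ (where $n_\ell = v_\ell(m)$) and using the CRT decomposition $\GL_2(\Z/m\Z) = \prod_{\ell\in\mathfrak{s}}\GL_2(\Z/\ell^{n_\ell}\Z)$, the product of the $\ell$-adic images has index at most $\prod_\ell I_\ell(d_0)$ in this product. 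It remains to bound the Goursat-defect of $H$ inside the product of $\ell$-components: since $\SL_2(\Z_\ell)$ is (essentially) perfect for $\ell \geq 5$, the abelianization of each $\ell$-adic image is close to $\det \rho_{\ell^\infty}(\gg_{F_0}) \subset \Z_\ell^\times$, and any common finite abelian quotient of two such images for distinct $\ell,\ell'\in\mathfrak{s}$ is bounded in terms of $\gcd(\ell-1,\ell'-1)$ together with the fact that the image of the cyclotomic character $\chi\colon \gg_{F_0} \ra \prod_{\ell\in\mathfrak{s}}\Z_\ell^\times$ has index dividing $d_0$. A quantitative Goursat argument then yields a uniform bound on the defect depending only on $(d_0,\mathfrak{s})$.

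Granted this $J$, the $H$-orbit of $P$ has size at least $m^2\prod_{\ell\in\mathfrak{s}}(1-\ell^{-2})/J$, which must be $\leq d$, yielding the claimed $m \leq C(d_0,\mathfrak{s})\sqrt{d}$. The main obstacle is precisely this uniform Goursat step: a naive per-prime application of strong Arai gives $\ell^{n_\ell}\leq C_\ell(d_0)\sqrt{d}$ for each $\ell$, but multiplying these estimates only gives $d^{|\mathfrak{s}|/2}$. To reach the exponent $1/2$ one genuinely needs the uniform near-independence of the $\ell$-adic Galois representations at primes in $\mathfrak{s}$, for non-CM elliptic curves of bounded $j$-invariant degree.
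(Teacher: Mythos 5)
Your orbit–stabilizer setup is sound: if one can bound $[\GL_2(\Z/m\Z):H]$ by a constant $J(d_0,\mathfrak{s})$, the counting argument you give does yield $m\le C\sqrt{d}$. You also correctly observe that multiplying per-prime \emph{inequalities} $\ell^{n_\ell}\le C_\ell\sqrt{d}$ would only give $d^{|\mathfrak{s}|/2}$. But this diagnosis then leads you to propose a quantitative Goursat / ``uniform near-independence'' lemma which you do not prove, and that step is where the gap lies: the sketch about abelianizations of $\ell$-adic images being close to $\det$-images and common quotients being controlled by $\gcd(\ell-1,\ell'-1)$ is plausible but nowhere near a proof, and it would take real work to make uniform in $E$.

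The paper sidesteps this entirely, and the fix is worth internalizing. Instead of passing to $\GL_2(\Z/m\Z)$ and comparing $H$ to a product, work one prime at a time but keep \emph{divisibilities} rather than inequalities. For each $\ell\in\mathfrak{s}$, a point of order $\ell^{a_\ell}$ forces the $\ell$-adic image over $F$ into a conjugate of $\Gamma_1(\ell^{a_\ell})$, so $\ell^{2a_\ell-2}(\ell^2-1)$ divides $[\GL_2(\Z_\ell):\rho_{\ell^\infty}(\gg_F)]$, which in turn divides $[F:F_0]\cdot[\GL_2(\Z_\ell):\rho_{\ell^\infty}(\gg_{F_0})]$. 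Strong Arai bounds the $\ell$-adic valuation $v_\ell$ of the latter index over $F_0$, giving $\ell^{2a_\ell-2-v_\ell}\mid[F:F_0]$. Crucially, for distinct $\ell$ these are powers of \emph{distinct} primes, so their product still divides $[F:F_0]$; multiplying over $\ell\in\mathfrak{s}$ gives $N^2/\prod_{\ell}\ell^{2+v_\ell}\mid[F:F_0]$, hence $N\le C\sqrt{[F:F_0]}$ with no Goursat input and no loss of a factor $|\mathfrak{s}|$ in the exponent. The near-independence you were reaching for is genuine content, but for this theorem it is not needed: the coprimality of $\ell$-power divisors of $[F:F_0]$ does the job.

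One minor point: the reduction to $E$ defined over $F_0$ uses a quadratic twist (degree $\le 2$), since $j=0,1728$ are CM and excluded; the degree-$6$ bound you cite is not wrong but is not what is used.
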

\begin{proof}
We consider non-CM elliptic curves $E$ defined over number fields $F$ such that $F_0 \coloneqq \Q(j(E))$
is a number field of degree $d_0$.  In case $F = F_0$, as we range over all $E_{/F_0}$, by Theorem \ref{STRONGARAITHM} the index of the image of the $\ell$-adic Galois representation
is uniformly bounded.  Thus there is $v_{\ell} \in \N$ such that for all such $E_{/F_0}$, we have
\[ \ord_{\ell} [\GL_2(\Z_{\ell}):\rho_{E,\ell^{\infty}}(\mathfrak{g}_{F_0})] \leq v_{\ell}. \]
With this notation, we will show that we may take
\[ C = \left( 2 \prod_{\ell \in \mathfrak{s}} \ell^{2+v_{\ell}} \right)^{\frac{1}{2}}. \]
Now let $E_{/F}$ be as above, and suppose $E(F)$ has a point of order
\[ N = \prod_{\ell \in \mathfrak{s}} \ell^{a_{\ell}}. \]
Let $\mathfrak{s}' = \{ \ell \in \mathfrak{s} \mid a_{\ell} > 0 \}$.  We also suppose, temporarily, that $E$ arises by base extension from
an elliptic curve defined over $F_0$.  Let $\ell \in \mathfrak{s}'$.  Since $E(F)$ has a point of
order $\ell^{a_{\ell}}$, this forces the image of the $\ell$-adic Galois representation to lie in a subgroup conjugate to
\[\Gamma_1(\ell^{a_{\ell}}) := \begin{bmatrix} 1 + \ell^{a_{\ell}} \Z_{\ell} & \Z_{\ell} \\
\ell^{a_{\ell}} \Z_{\ell} & \Z_{\ell}^{\times} \end{bmatrix}. \]
Since $[\GL_2(\Z_{\ell}):\Gamma_1(\ell^{a_{\ell}})] = \ell^{2a_{\ell}-2} (\ell^2-1)$, we get
\[  \ell^{2a_{\ell}-2} (\ell^2-1)  \mid
[\GL_2(\Z_{\ell}):\rho_{E,\ell^{\infty}}(\mathfrak{g}_{F})] \mid [F:F_0] [\GL_2(\Z_{\ell}):\rho_{E,\ell^{\infty}}(\mathfrak{g}_{F_0})]   \]
and thus
\[ \ell^{2 a_{\ell} -2 - v_{\ell}} \mid [F:F_0]. \]
(Here and below, we write $a\mid b$ for rational numbers $a, b$ whenever $b=aq$ for some $q \in \Z$.)
Compiling these divisibilities across all $\ell \in \mathfrak{s}'$, we get
\[ \frac{N^2}{\prod_{\ell \in \mathfrak{s}'} \ell^{2+v_{\ell}}} \mid [F:F_0] \]
and thus
\[ N \leq \left( \prod_{\ell \in \mathfrak{s}'} \ell^{2+v_{\ell}} \right)^{\frac{1}{2}} [F:F_0]^{\frac{1}{2}} \leq \left( \prod_{\ell \in \mathfrak{s}} \ell^{2+v_{\ell}} \right)^{\frac{1}{2}} [F:F_0]^{\frac{1}{2}}. \]
Now suppose that $E_{/F}$ does not necessarily arise by base extension from
an elliptic curve over $F_0$.  Nevertheless there is an elliptic curve $(E_0)_{/F_0}$ and
and a quadratic extension $F'/F$ such that $E_{/F'} \cong (E_0)_{/F'}$.  Since
$\exp E(F)[\mathfrak{s}^{\infty}] \mid \exp E(F')[\mathfrak{s}^{\infty}]$, applying the previously
addressed special case with $F'$ in place of $F$ gives
\[ N \leq \left( \prod_{\ell \in \mathfrak{s}} \ell^{2+v_{\ell}} \right)^{\frac{1}{2}} [F':F_0]^{\frac{1}{2}} = \left( 2 \prod_{\ell \in \mathfrak{s}} \ell^{2+v_{\ell}} \right)^{\frac{1}{2}}[F:F_0]^{\frac{1}{2}}. \qedhere\]
\end{proof}

\begin{remark}
Theorem \ref{ARAICOR} is sharp up to the value of the constant.  Indeed, for a non-CM elliptic curve $E$ defined
over a number field $F_0$ and any prime $\ell$, since there are $(\ell^2-1) \ell^{2n-2}$ points of order $\ell^n$
on $E(\overline{F})$, there is a field extension $F_n/F_0$ of degree at most
$(\ell^2-1)\ell^{2n-2} < \ell^{2n}$ such that $E(F_n)$ has a point of order $\ell^n$.
\end{remark}


\section{The proofs of Theorems \ref{THM1}, \ref{THM2} and \ref{THM3}}

\subsection{An easy lemma}

\begin{lemma}
\label{LEMMA1}
Let $F$ be a complete discretely valued field, with residue characteristic $p \geq 0$.  Let $\ell \neq p$ be a prime
number, and let $E_{/F}$ be an elliptic curve over $F$
with semistable reduction.  Then $e(F(E[\ell])/F) \mid \ell$.
\end{lemma}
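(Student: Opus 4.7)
The plan is to split into the two cases allowed by the hypothesis of semistable reduction: good reduction and multiplicative reduction.

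In the good reduction case, the criterion of N\'eron--Ogg--Shafarevich tells us that $E[\ell]$ is an unramified Galois module over $F$ for every prime $\ell$ different from the residue characteristic $p$. Hence the extension $F(E[\ell])/F$ is itself unramified, so $e(F(E[\ell])/F) = 1$, which trivially divides $\ell$. This case should be dispatched in one or two lines.

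The real content lies in the multiplicative reduction case, where I would invoke the theory of Tate curves. After passing to the unique unramified quadratic extension $F'/F$ (trivial in the split case), $E_{/F'}$ becomes isomorphic to the Tate curve, so there is a Galois-equivariant isomorphism $E(\overline{F}) \cong \overline{F}^\times / q^{\Z}$ for some $q \in F'$ with $v(q) > 0$. The $\ell$-torsion subgroup is then generated by a primitive $\ell$th root of unity $\zeta_\ell$ and an $\ell$th root $q^{1/\ell}$. Since $\ell \neq p$, the cyclotomic extension $F'(\zeta_\ell)/F'$ is unramified; meanwhile $F'(\zeta_\ell, q^{1/\ell})/F'(\zeta_\ell)$ is a Kummer extension of degree dividing $\ell$, and any subextension of a degree-$\ell$ extension obtained by adjoining an $\ell$th root (with $\ell$ prime to the residue characteristic) is tame with ramification index dividing $\ell$. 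Composing across this tower and using that the preliminary step $F'/F$ is unramified, we obtain $e(F'(E[\ell])/F) \mid \ell$. Finally, since $e$ is multiplicative and $e(F(E[\ell])/F)$ divides $e(F'(E[\ell])/F)$, the desired divisibility follows.

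The main obstacle, such as there is one, is packaging the Tate uniformization argument cleanly: being careful that one really only needs an \emph{unramified} base change to access the uniformization (so no extra ramification is introduced), and that the generator $q^{1/\ell}$ contributes at most a tame ramification of degree $\ell$ regardless of whether $v(q)$ is divisible by $\ell$ or not. Both of these are standard facts about Tate curves and about Kummer theory in the tame setting, so I would simply cite the relevant source (e.g., Silverman's treatment in \emph{Advanced Topics}) rather than reprove them.
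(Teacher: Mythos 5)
Your proposal matches the paper's proof: the same dichotomy into good and multiplicative reduction, with N\'eron--Ogg--Shafarevich dispatching the first case and the Tate uniformization (after an unramified quadratic base change) handling the second via the tower $F' \subset F'(\mu_\ell) \subset F'(\mu_\ell, q^{1/\ell})$, the last step being a Kummer extension of degree $1$ or $\ell$. The only cosmetic difference is that the paper bounds the ramification index simply by the degree $[F'(\mu_\ell, q^{1/\ell}) : F'(\mu_\ell)] \in \{1,\ell\}$, whereas you phrase the same bound in terms of tameness; both are correct and amount to the same estimate.
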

\begin{proof}
Case 1: Suppose $E_{/F}$ has good reduction.  Then by
the N\'eron-Ogg-Shafarevich criterion, since $\ell \neq p$ the extension $F(E[\ell])/F$ is unramified: $e(F(E[\ell])/F) = 1$.
\medskip

\noindent Case 2: Suppose $E_{/F}$ has multiplicative reduction.  Then there is an unramified quadratic extension $F'/F$ such that $E_{/F'}$ admits an analytic uniformization, or in other words is a \textbf{Tate curve} in the sense of \cite[$\S 5.3$]{SilvermanII}: $E_{/F'} \cong \mathbb{G}_m/\langle q^{\mathbb{\Z}} \rangle$.  It follows that $F'(E[\ell]) = F'(\mu_{\ell},q^{\frac{1}{\ell}})$.  Put $F'' = F'(\mu_{\ell})$.
Since $F'/F$ is unramified and $\ell \neq p$, the extension $F''(q^{\frac{1}{\ell}})/F''$ is Galois and
\[ e(F(E[\ell])/F) = e(F''(q^{\frac{1}{\ell}})/F'') \mid
[F''(q^{\frac{1}{\ell}}):F''] . \]
By Kummer theory, we have
 \[ [F''(q^{\frac{1}{\ell}}):F''] = \begin{cases}
 1 & \text{if $q$ is an $\ell$th power in $F''$}, \\
 \ell & \text{otherwise}.
 \end{cases} \qedhere \]
 \end{proof}



\subsection{Bounding the exponent}
\noindent For the sake of a uniform presentation, we begin by fixing some notation. In the case of Theorems \ref{THM1} and \ref{THM2}, we let $F_0$ be as in the theorem statement, put $d_0=[F_0:\Q]$, and define $\ell_0$ as in Corollary \ref{KEYCORB}. In the case of Theorem \ref{THM3}, we let $F_0$ be any number field of the given degree $d_0$, and we define $\ell_0$ as in Corollary \ref{KEYCORA}.

Let $E$ be a non-CM elliptic curve over over a degree $d$ number field $F$ having $j(E) \in F_0$. (For CM curves, any of \cite{Silverberg92}, \cite{HS99}, \cite{CP15,CP17} yield stronger results.) It suffices to show that there is a constant $C$ with
\[ \exp E(F)[{\rm tors}] \le C d^{3/2+\epsilon}, \]
where $C$ is a function of $\ell_0$, $d_0$, and $\epsilon$. Note that since $\ell_0$ is determined by $F_0$ in the case of Theorems \ref{THM1} and \ref{THM2}, the constant $C$ depends on $F_0$ and $\epsilon$ in those theorems. Since $\ell_0$ is determined by $d_0$ in the case of Theorem \ref{THM3}, the constant $C$ depends only on $d_0$ and $\epsilon$ in that situation.

\medskip
\noindent Step 0: We first reduce to a special case. Suppose that there is a $C'>0$ with the property that for all elliptic curves $E_{/F}$ obtained by base extension from an elliptic curve $(E_0)_{/F_0}$, we have
\[ \exp E(F)[\tors] \leq C' [F:\Q]^{3/2+\epsilon}. \]
Now let $E_{/F}$ be an elliptic
curve with $j(E) \in F_0$.  Then there is an elliptic curve $(E_0)_{/F_0}$ with $j(E) = j(E_0)$ and a quadratic  extension $F'/F$ such that $E_{/F'} \cong (E_0)_{/F'}$.  Let $d = [F:\Q]$ and $d' = [F':\Q]$. Then
\[ \exp E(F)[\tors] \mid \exp E(F')[\tors] \leq C' \cdot d'^{3/2+\epsilon} \le (C'\cdot 2^{3/2+\epsilon}) \cdot d^{3/2+\epsilon}.
\]
So we may choose $C = C' \cdot 2^{3/2+\epsilon}$. If $C'$ depends only on $\ell_0, d_0$, and $\epsilon$, then so does $C$.

\medskip
\noindent {Step 1}: Now suppose that $E$ is obtained by base
change from an elliptic curve defined over $F_0$, which for notational simplicity we continue to denote by $E$.  Write \[E(F)[\tors] = \prod_{\ell \in \mathcal{P}} E(F)[\ell^{\infty}] \] and, for $\ell \in \mathcal{P}$,
\[E(F)[\ell^{\infty}] \cong \Z/\ell^{a_\ell} \Z \times \Z/\ell^{b_\ell} \Z \] for integers $a_{\ell}, b_{\ell}$ satisfying $0 \le a_{\ell} \le b_{\ell}$.  We will partition $\{\ell \in \mathcal{P} \mid b_{\ell} \geq 1\}$ into two classes $\Ss_1$ and $\Ss_2$, get bounds on $\exp E(F)[\Ss_1^{\infty}]$ and $\exp E(F)[\Ss_2^{\infty}]$, and multiply them to get a bound on $\exp E(F)[\tors]$. \medskip
\medskip

\noindent {Step 2}: Put $\Ss_1 = \{ \ell \in \mathcal{P} \mid \ell \leq \ell_0\}$.  By Theorem \ref{ARAICOR}, we have
\[ \exp E(F)[\Ss_1^{\infty}] \leq C_1(d_0,\ell_0) [F:F_0]^{\frac{1}{2}} \le C_1(d_0,\ell_0) d^{1/2}. \]
\medskip

\noindent {Step 3}: Let
\[ \Ss_2 = \{ \ell \in \mathcal{P} \mid \ell > \ell_0 \text{ and } b_{\ell} \geq 1\}. \]
List the elements of $\Ss_2$ in decreasing order:
\[ \ell_1 > \ell_2 > \dots > \ell_k. \]
Suppose that $\ell \in \Ss_2$ and $P \in E(F)$ is a point of order $\ell^{b_{\ell}}$. By Corollaries \ref{KEYCORA} and
\ref{KEYCORB}, for each prime $\mathfrak{l}$ of $F_0$ lying above $\ell$,  there is a positive integer $c\le 12d_0$ and a prime $\mathcal{L}$ of $F_0(P)$ lying above $\mathfrak{l}$ with $e(\mathcal{L}/\mathfrak{l})$ divisible by either $\varphi(\ell^{b_\ell})/\gcd(\varphi(\ell^{b_\ell}),c)$ or $\ell^{b_\ell}$. Thus, $e(\mathcal{L}/\ell)$ is divisible by either $\varphi(\ell^{b_\ell})/\gcd(\varphi(\ell^{b_\ell}),(12d_0)!)$ or by $\ell^{b_\ell}$.
\\ \\
For simplicity, from now on we write the exponent on $\ell_i$ as $b_{i}$ rather than $b_{\ell_i}$. Let $P_1, \dots, P_k$ be $F$-rational torsion points of orders $\ell_1^{b_1}, \dots, \ell_k^{b_k}$. Choose $D_1, \dots, D_k$ with
\[ D_i \in \{\varphi(\ell_i^{b_i})/\gcd(\varphi(\ell_i^{b_i}),(12d_0)!), \ell_i^{b_i}\} \]
and such that the field $F_0(P_i)$ has a prime $\mathcal{L}_i$ above $\ell_i$ with $e(\mathcal{L}_i/\ell_i)$ divisible by $D_i$.
\\ \\
We introduce the sequence of fields $K_{-1} = F_0$, $K_0 = F_0(E[12])$, $K_1 = K_{0}(P_1)$, $K_2 = K_1(P_2)$, \dots, $K_k = K_{k-1}(P_k)$. By a result of Raynaud, $E_{/K_0}$ has everywhere semistable reduction; see e.g. \cite[Thm. 3.5]{SZ95}. Since $K_k \subset F(E[12])$, we have
\begin{equation}\label{eq:productlessthand} \prod_{i=0}^{k} [K_i: K_{i-1}] = [K_k: F_0] \le [F(E[12]):\Q] = [F(E[12]):F] \cdot d \le 4608d. \end{equation}
(We used here that $\#\GL_2(\Z/12\Z) = 4608$.) Moreover,
\begin{equation}\label{eq:D1D0bound} [K_1: K_0][K_0:K_{-1}] = [K_1:F_0] \geq [F_0(P_1):F_0] = \frac{1}{d_0} [F_0(P_1):\Q] \ge \frac{D_1}{d_0}.\end{equation}
Now let us look at $[K_i: K_{i-1}]$ where $i \ge 2$. For notational simplicity, let $\ell=\ell_i$. Since $K_i \supset F_0(P_i)$, we know there is a prime $\mathcal{L}$ of $K_i$ above $\ell$ for which $e(\mathcal{L}/\ell)$ is divisible by $D_i$.  Define prime ideals $\mathcal{L}_{j}$ of $\Z_{K_j}$, for $j=-1,0, 1, \dots, i$, by
\[ \mathcal{L}_{j} = K_{j} \cap \mathcal{L}. \]
Thus, $\mathcal{L}_i = \mathcal{L}$, and
\begin{equation*} D_i \mid e(\mathcal{L}/\ell) = e(\mathcal{L}_{-1}/\ell) e(\mathcal{L}_0/\mathcal{L}_{-1}) \prod_{j=1}^{i} e(\mathcal{L}_{j}/\mathcal{L}_{j-1}). \end{equation*}
Since $e(\mathcal{L}_{-1}/\ell) \le d_0$ and $e(\mathcal{L}_0/\mathcal{L}_{-1}) \mid [F_0(E[12]):F_0] \mid 4608$, it follows that
\begin{equation}\label{eq:fundiv} D_i \mid d_0! \cdot 4608 \cdot \prod_{j=1}^{i} e(\mathcal{L}_{j}/\mathcal{L}_{j-1}). \end{equation}
Suppose that $1 \le j < i$. We have \[ K_{j} = K_{j-1}(P_j) \subset K_{j-1}(E[\ell_{j}^{b_{j}}]).\] Hence, $e(\mathcal{L}_{j}/\mathcal{L}_{j-1})$ divides the ramification index of $\mathcal{L}_{j-1}$ in $K_{j-1}(E[\ell_{j}^{b_{j}}])$. Note that $\mathcal{L}_{j-1}$ lies above the rational prime $\ell=\ell_i$, which is distinct from $\ell_j$ (since $j < i$). By Lemma \ref{LEMMA1}, the ramification index of $\mathcal{L}_{j-1}$ in $K_{j-1}(E[\ell_{j}])$ divides $\ell_j$. Moreover,
\[ [K_{j-1}(E[\ell_j^{b_j}]) : K_{j-1}(E[\ell_j])]  \]
is a power of $\ell_{j}$. (The image of the representation on the $\ell_j^{b_j}$-torsion lands in the kernel of the natural map $\mathrm{GL}_2(\Z/\ell_j^{b_j}\Z) \ra \mathrm{GL}_2(\Z/\ell_j\Z)$, an $\ell_j$-group.) Thus, the ramification index of $\mathcal{L}_{j-1}$ in $K_{j-1}(E[\ell_{j}^{b_j}])$ is a power of $\ell_{j}$.  So $e(\mathcal{L}_{j}/\mathcal{L}_{j-1})$ is also a power of $\ell_{j}$.
\\ \\
The definition of $D_i$ and the ordering of the primes $\ell_1, \dots, \ell_k$ imples that $D_i$ is coprime to $\ell_1, \dots, \ell_{i-1}$.  Since $e(\mathcal{L}_j/\mathcal{L}_{j-1})$ is a power of $\ell_j$ for $j < i$, \eqref{eq:fundiv} implies that
\[ D_i \mid d_0! \cdot 4608\cdot  e(\mathcal{L}/\mathcal{L}_{i-1}),\quad\text{whence}\quad e(\mathcal{L}/\mathcal{L}_{i-1}) \ge \frac{D_i}{4608\cdot d_0!}. \]
Therefore,
\[ [K_i: K_{i-1}] \ge e(\mathcal{L}_i/\mathcal{L}_{i-1}) \ge \frac{D_i}{4608\cdot d_0!} \qquad (\text{for }2 \le i \le k). \]
From the definition of $D_i$, it is easy to see that every
\[ D_i \ge \frac{\ell_i^{b_i}}{2\cdot (12d_0)!}. \]
Combining these estimates with \eqref{eq:productlessthand} and \eqref{eq:D1D0bound}, we find that
\begin{align}\label{eq:complicateddlower} 4608d \ge \prod_{i=0}^{k}[K_i:K_{i-1}] &= [K_0:K_{-1}][K_1:K_0] \prod_{2 \le i \le k}[K_{i}:K_{i-1}] \\&\ge \frac{D_1}{d_0} \cdot \prod_{2\le i \le k} \frac{D_i}{4608 \cdot d_0!}\ge \prod_{i=1}^{k}\frac{\ell_i^{b_i}}{9216 \cdot (12d_0)!^2}.\notag \end{align}
Put
\[ Z_0 = 9216 \cdot (12d_0)!^2. \]
Let $Z> Z_0$ be a constant (depending on $d_0$ and $\epsilon$) to be specified momentarily.  If $\ell_i^{b_i} > Z$, then the $i$th factor in the last displayed product on $i$ is at least $\frac{Z}{Z_0}$. Hence, there can be at most $\log(4608d)/\log{(Z/Z_0)}$ such values of $i$. There are at most $\pi(Z)$ values of $i$ with $\ell_i^{b_i} \le Z$, where $\pi(\cdot)$ is the usual prime-counting function. So
\[ k \le \pi(Z) + \frac{\log(4608d)}{\log(Z/Z_0)}.\] We may now deduce from \eqref{eq:complicateddlower} that
\[ \prod_{i=1}^{k}\ell_i^{b_i} \le Z_0^k \cdot 4608 d \le Z_0^{\pi(Z)} \cdot 4608^{1+\log(Z)/\log(Z/Z_0)} \cdot d^{1+\log(Z_0)/\log(Z/Z_0)}. \]
We fix $Z$ large enough, in terms of $d_0$ and $\epsilon$, to make $\log(Z_0)/\log(Z/Z_0) < \epsilon$; then
\[ \exp E(F)[\Ss_2^{\infty}] = \prod_{i=1}^{k} \ell_i^{b_i} \le C_2(d_0,\epsilon) d^{1+\epsilon}. \]\medskip
\noindent Step 4: Putting the contribution from $\Ss_1$ and $\Ss_2$ together,
\begin{align*} \exp E(F)[\tors] &\le C_1(d_0,\ell_0) d^{1/2} \cdot C_2(d_0,\epsilon) d^{1+\epsilon} \le C(d_0,\ell_0,\epsilon) d^{3/2+\epsilon},\end{align*}
as desired.

%

\subsection{From the exponent to the order}
\noindent Let $\mathcal{F}$ be a set, each element of which is an elliptic curve defined over a number field $E_{/F}$, and such that for some $B > 0$ and all $E_{/F} \in \mathcal{F}$ we have
\[ \exp E(F)[\tors] = O([F:\Q]^B). \]
(The implied constant is allowed to depend on $\mathcal{F}$, but \emph{not} on the choice of $E_{/F} \in \mathcal{F}$.) Let $b = \exp E(F)[\tors]$.  Then there is a positive integer $a$ dividing $b$ such that
\[ E(F)[\tors] \cong \Z/a\Z \oplus \Z/b\Z. \]
Since $E$ has full $a$-torsion over $F$, the field $F$ contains $\Q(\mu_a)$, so that $[F:\Q] \geq \varphi(a)$.  It follows (cf. \cite[Thms. 327, 328]{HW08}) that for all $\epsilon > 0$, we have
\[ a = O_{\epsilon}([F:\Q]^{1+\epsilon}). \]
 So
\[ \# E(F)[\tors] = ab = O_{\epsilon}([F:\Q]^{B+1+\epsilon}), \]
the implied constant depending on $\epsilon$ (and $\mathcal{F}$) but \emph{not} on the choice $E_{/F} \in \mathcal{F}$.
\\ \indent
Applying this with $B = \frac{3}{2}$ gives a bound $O_{\epsilon}([F:\Q]^{\frac{5}{2}+\epsilon})$ on the size of
the torsion subgroup, completing the proofs of Theorems \ref{THM1}, \ref{THM2} and \ref{THM3}.

\end{document}